\newcommand{\frobnorm}[1]{\left\Vert #1 \right\Vert_{\mathrm{F}}}
\newcommand{\rset}{\mathbb{R}}
\newcommand{\norm}[1]{\left\lVert#1\right\rVert_2}
\newcommand{\normf}[1]{\left\lVert#1\right\rVert_F}
\newcommand{\prob}[1]{\mathbb{P}\left(#1\right)}
\newcommand{\bracket}[1]{\left( #1 \right)}
\newcommand{\Exp}[2]{\mathbb{E}_{#1}\bracket{#2}}
\newcommand{\cond}{ \middle| }
\def\Id{\mathrm{I}}
\def\rset{\mathbb{R}}
\newcommand{\continuation}{??}
\def\eqsp{\,}
\newcommand{\diag}{\operatorname{diag}}
\def\P{\mathbb{P}}
\def\PE{\mathbb{E}}
\def\nset{\mathbb{N}}
\def\rset{\mathbb{R}}
\def\Sphere{\mathbb{S}}
\DeclareMathAlphabet\mathbfcal{OMS}{cmsy}{b}{n} 
\newtheorem{theorem}{Theorem}
\newtheorem{lemma}{Lemma}
\Crefname{lemma}{Lemma}{Lemmas}
\crefname{table}{table}{tables}
\Crefname{Table}{Table}{Tables}
\Crefname{Algorithm}{Algorithm}{Algorithms}
\def\rk{\operatorname{rk}}
\def\dixon{T^{\operatorname{d}}}
\def\vanilla{T^{\operatorname{v}}}
\def\counter{T^{\operatorname{cb}}}
\title{On the upper bounds for the matrix spectral norm}
\author[a,b]{A. Naumov}
\author[a]{M. Rakhuba}
\author[a]{D. Ryapolov\thanks{Corresponding author: \texttt{dh1101.foxy@gmail.com}}}
\author[a]{S. Samsonov}
\affil[a]{{HSE University, Russia}}
\affil[b]{{Steklov Mathematical Institute of Russian Academy of Sciences, Russia}}
\date{}
\providecommand{\keywords}[1]
{
  \small	
  \textbf{\textit{Keywords---}} #1
}
\begin{document}
\maketitle

\begin{abstract}
We consider the problem of estimating the spectral norm of a matrix using only matrix-vector products. We propose a new \textit{Counterbalance} estimator that provides upper bounds on the norm and derive probabilistic guarantees on its underestimation. Compared to standard approaches such as the power method, the proposed estimator produces significantly tighter upper bounds in both synthetic and real-world settings. Our method is especially effective for matrices with fast-decaying spectra, such as those arising in deep learning and inverse problems.
\end{abstract}
\keywords{Spectral norm, Probabilistic bounds, Randomized methods}

\section{Introduction}
\label{sec:intro}
Estimating the spectral norm of an implicitly defined matrix using only matrix-vector products (matvecs) is a fundamental problem in numerical linear algebra and machine learning. In many applications, one cannot access the entire matrix $A$, but it is possible to compute products $A x$ and sometimes $A^\top x$ for chosen vectors $x$. The challenge is to estimate $\norm{A}$ (or related norms) from above with few matvecs. This scenario arises, for example, in sensitivity analysis of matrix functions and condition number estimation \cite{higham_exp_2009, higham_exp_2013, wang2015condition}, and in regularizing deep neural networks by enforcing Lipschitz continuity (bounding layer Jacobian norms) \cite{gouk2021regularisation}. Traditional deterministic approaches like the power method estimate $\norm{A}$ from below and require multiple sequential iterations until convergence, which may be infeasible if each matvec is expensive. Randomized algorithms provide attractive alternatives: by multiplying $A$ with random test vectors, one can obtain unbiased or statistically reliable estimates of matrix norms much faster in practice. The main advantage of randomized methods is the fact that the required matvecs can be computed in parallel. 
\par 
Perhaps the simplest approach to estimate $\norm{A}$ is to compute matvec $AX$ for a random vector $X$ and use the norm $\norm{AX}$ for an estimator. Specifically, let $X$ be a random vector of appropriate dimension with $\norm{X} = 1$, and consider the scalar $\norm{AX}$. 
This quantity is always at most $\norm{A}$, and an early work by Dixon \cite{dixon1983} established probabilistic guarantees for this estimator: for a random vector $X$ drawn uniformly from the sphere $\Sphere_{n-1}$, and a symmetric positive-definite matrix $A \in \rset^{n \times n}$ one has
\begin{equation}
\label{eq:dixon}
\P(\norm{A} \leq \theta \norm{AX}) \geq 1 - 0.8 \sqrt{n} \theta^{-1/2}\eqsp,
\end{equation}
for any $\theta > 1$. Equivalently, with high probability, $\norm{AX}$ underestimates the true norm $\norm{A}$ by at most a factor of $\theta$ that grows with the ambient dimension~$d$. In practice, one can boost the reliability of norm estimates \eqref{eq:dixon} by averaging or taking maxima over multiple independent statistics, see \cite{dixon1983,halko2011structure}. If $X_1,\dots,X_k$ are $k$ independent random vectors, a natural improvement of \eqref{eq:dixon} is the maximum observed norm: 
\begin{equation}
\label{eq:maximum}
T(X_1,\ldots,X_k) = \max_{1\le i\le k}\norm{AX_i}\eqsp.
\end{equation}
This multi-sample approach reduces the risk of a “bad” draw that misses the top singular vector and can be straightforwardly parallelized. 
\par 
It is important to note that these basic randomized estimators tend to underestimate the true norm. Indeed, $\norm{AX} \leq \norm{A}$ for any unit vector $X$. In some applications, a probabilistic upper bound on $\norm{A}$ is desired. One can obtain an upper bound by scaling the test statistic \eqref{eq:dixon} or \eqref{eq:maximum} by an appropriate factor. Indeed, setting $\theta(\delta) = (4/5)^2 n/\delta^2$ in \eqref{eq:dixon} yields 
\[
\P(\norm{A} \leq \theta(\delta) \norm{AX}) \geq 1 - \delta\eqsp.
\]
This bound has unfavorable scaling with dimension $n$, and can be improved if one considers the statistic $T(\theta, X) = \theta \norm{AX}$ with other distribution of test vector $X$, as suggested in \cite{halko2011structure}. In particular, for a matrix $A \in \rset^{m \times n}$ and a vector $X$ with normal distribution $X \sim \mathcal{N}(0,\Id_{n})$, it holds that (see \cite[Lemma~4.1]{halko2011structure}):
\begin{equation}
\label{eq:simple_upper_bound_prob}
\P(\theta \norm{AX} \leq \norm{A}) \leq \sqrt{\frac{2}{\pi}} \frac{1}{\theta}\eqsp.
\end{equation}
The bound \eqref{eq:simple_upper_bound_prob} can not be improved in general, as it is tight for matrices $A$ with $\rk{A} = 1$. Note that the estimator $\theta \norm{AX}$ typically has smaller underestimation probability $\P(\theta \norm{AX} \leq \norm{A})$ compared to the upper bound \eqref{eq:simple_upper_bound_prob}, since 
\[
\PE[\norm{AX}^2] = \frobnorm{A}^2\,.
\]
Using the apparatus of concentration of measure inequalities \cite{blm:2013,vershynin:2018}, one can study the concentration properties of $\norm{AX}$ around its expectation, which can be shown to be close to $\frobnorm{A}$. Thus, when we consider the matrix $A$ with $\frobnorm{A} \gg \norm{A}$, one can expect that $\norm{AX}$ will significantly overestimate $\norm{A}$. This property can be formalized using the concept of \emph{effective rank} $\rho = \frobnorm{A}^2/\norm{A}^2$. At the same time, the upper bound of the underestimation probability \eqref{eq:simple_upper_bound_prob} does not depend upon $\rho$. This raises the following dilemma, which can be formulated as follows. Suppose that we use a statistic $T(\theta, X)$ (for example, $T(\theta, X) = \theta \norm{AX}$) to provide an upper bound on $\norm{A}$. Suppose also that we are able to provide an upper bound 
\begin{equation}
\label{eq:upper_bound}
\P(T(\theta, X) \leq \norm{A}) \leq g(\theta)
\end{equation}
for some function $g(\theta)$ that is independent of $A$. Then tighter expressions for $g(\theta)$ requires additional information about $A$, such as knowledge about the effective rank $\rho$. Typically, this information is not available for a statistician. Hence, a natural question arises:
\begin{center}
\emph{Can we construct a simple estimator for $\norm{A}$, which is agnostic to the structure of $A$ and provides tighter upper bound on $\norm{A}$, compared to \eqref{eq:simple_upper_bound_prob}?}
\end{center}
We provide details on the methods for comparing different estimators of $\norm{A}$ and discuss precise meaning of the phrase "tight upper bound" in \Cref{sec:methodology}. Our main contributions are as follows:
\begin{itemize}[noitemsep,topsep=0pt,leftmargin=3em]
    \item We propose a new \emph{Counterbalance} (CB for short) estimator $\counter(\theta,X)$, which provides an upper bounds on $\norm{A}$, and derive bounds on its underestimation probability.
    \[
    \P(\counter(\theta,X) \leq \norm{A})\eqsp.
    \]
    The term ``counterbalance'' comes from the additional summand, that we add to the classical estimator of a form \eqref{eq:simple_upper_bound_prob}, which noticeably sharpens our bound for low-rank matrices $A$. We also determine the way of selecting the parameter $\theta$ for our estimator, which does not require information about singular values of $A$.
     We show that the novel estimator allows for constructing tighter upper bounds for the operator norm of the underlying matrix and fixed underestimation probability, compared to baseline methods. 
    \item We perform a number of numerical simulations, illustrating the benefits of our method on various matrices with various ranks, effective ranks, and structure of singular values. We also consider the example of upper bounding the spectral norm of Jacobian of layers of ResNet neural networks. We show that our estimator allows for constructing tighter upper bounds for the operator norm of the underlying matrices for fixed underestimation probability, compared to baseline methods.
\end{itemize}

\paragraph{Notations} For a matrix $A \in \rset^{m \times n}$, we write $\norm{A}$ for its spectral norm and $\frobnorm{A}$ for its Frobenius norm. We denote by $\rho$ its \emph{effective rank} $\rho = \frobnorm{A}^2 / \norm{A}^2$. Given that $A$ admits a singular value decomposition (SVD) $A = U \Sigma V^{\top}$, $\Sigma = \diag(\sigma_1,\ldots,\sigma_r)$, where $\sigma_1 \geq \sigma_2 \geq \ldots \geq \sigma_r > 0$ are the corresponding singular values, effective rank of $A$ can be written as $\rho = (\sum_{i=1}^k\sigma_i^2)/\sigma_1^2$. We also denote by $\chi_{(1, \alpha)}^2(t) = \prob{\xi^2 + \alpha \eta^2 \le t}$, where $\xi, \eta$ are i.i.d. random variables with standard normal distribution $\mathcal{N}(0, 1)$, and write $p_{(1, \alpha)}(t)$ for the corresponding density function. For a statistic $T(\theta, X)$ that aims to upper bound $\norm{A}$, we refer to the quantity $\prob{T(\theta, X) \le \norm{A}}$ as the underestimation probability. In the present text, the following abbreviations are used: ``w.r.t.'' stands for ``with respect to'', ``i.i.d.'' - for "independent and
identically distributed", ``r.h.s.'' - for ``right-hand side''.

\section{Literature review}
\label{sec:lit_review}
Randomized estimates of type \eqref{eq:dixon} were suggested in \cite{dixon1983} with test vectors $X$ following the uniform distribution on a unit sphere. Then this estimate was generalized for the setting of normal and Rademacher vectors in \cite{halko2011structure}. Special setting of rank-1 test vectors and structured matrices $A$ (admitting a Kroenecker product structure) has been considered in \cite{bujanovic2021}. Kuczyński and Woźniakowski \cite{kuczynski1992} studied the power method with random start, including probability bounds on how quickly the largest eigenvalue of a symmetric $A$ is approximated. In particular, they showed that the expected error decays with each iteration proportional to the ratio $(\sigma_2/\sigma_1)^{2}$ and derived distributions for the approximation error after $t$ steps \cite{kuczynski1992}. Their work also compared power iteration with Lanczos (Krylov subspace) methods initiated by a random vector. Both of these methods require sequential matvecs. Building on this idea, Hochstenbach \cite{hochstenbach2013} developed Krylov-based methods for constructing an upper bound on $\norm{A}$ with high probability. The underestimation probability is controlled by the number of matvec operations rather than the parameter $\theta$. Consequently, the method requires multiple sequential matvec operations to be accurate and guarantee high probability bound. 
The work~\cite{halko2011structure} suggested using a block of multiple random vectors in parallel, that is, to compute $Y = A X$ for randomly generated $X \sim \mathbb{R}^{n\times k}$ consisting of $k$ random vectors. Then $\norm{A}$ is approximated by $\norm{Y}$. Bujanović and Kressner \cite{bujanovic2021} considered the problem of estimating $\norm{A}$ based on rank-one random vectors $X$. We briefly mention a number of papers, which focus on estimating Frobenius norm or trace of a matrix $A$, see Hutchinson’s method \cite{hutchinson1989} and its later developments \cite{avron2011,roosta2015,meyer2021}.

\section{Methodology for comparing upper bounds on $\norm{A}$}
\label{sec:methodology}
In this section, we present the methodology for comparing two randomized upper bounds $T(\theta,Y)$ and $\hat{T}(\theta,Z)$ for the operator norm $\norm{A}$. Here we assume that the estimators $T$ and $\hat{T}$ rely on their own sets of random variables, denoted by $Y$ and $Z$, respectively. These estimators may differ in the number of random vectors used for matvec computations. We only require that both $T(\theta,Y)$ and $\hat{T}(\theta,Z)$ depend on a scalar parameter $\theta \geq 1$ and are monotonically increasing in $\theta$.
\par 
\paragraph{Oracle method} For the first method of comparing $T$ and $\hat{T}$, we assume that the cumulative distribution functions $\prob{T(\theta, Y) \leq t}, \ \prob{\hat{T}(\theta, Z) \leq t}$ can be computed analytically. We fix the desired underestimation probability $\delta \in (0,1)$ and calculate $\theta_1, \theta_2$ such that
\begin{equation}
\label{eq:theta_1_2_oracle_def}
\begin{split}
\theta_1 &= \inf \{ \theta : \P(T(\theta, Y) \leq \|A\|_2) \leq \delta \}\eqsp, \\
\theta_2 &= \inf \{ \theta : \P(\hat{T}(\theta, Z) \leq \|A\|_2) \leq \delta \}\eqsp.
\end{split}
\end{equation}
This choice of parameters $\theta_1,\theta_2$ guarantees the underestimation probability is exactly equal to $\delta$. Now we compare the distributions of $T$ and $\hat{T}$. We say that $T$ is tighter than $\hat{T}$ if it is closer to $\norm{A}$. To formalize this relationship, we compare the mean absolute error (MAE) of these statistics, that is, we say that $T$ is tighter than $\hat{T}$, if
\begin{equation}
\label{eq:MAE}
\PE \bigl|T(\theta_1, Y) - \norm{A}\bigr| < \PE \bigl|\hat{T}(\theta_2, Z) - \norm{A}\bigr|\eqsp.
\end{equation}
However, typically we do not have access to the analytical forms of the distributions of $T$ and $\hat{T}$. Consequently, we cannot find the exact values $\theta_1, \theta_2$ satisfying \eqref{eq:theta_1_2_oracle_def}.
\par 
\paragraph{Realizable method} 
The choice of parameters $\theta_1$ and $\theta_2$ suggested in \eqref{eq:theta_1_2_oracle_def} is typically not feasible in practice. Instead, we rely on upper bounds on the underestimation probabilities to define our estimators. Formally, we assume that for any $\theta \geq 0$,
\begin{equation}
\label{eq:g1_g2_def}
\begin{split}
\P(T(\theta, Y) \leq \|A\|_2) &\leq g(\theta)\eqsp, \\
\P(\hat{T}(\theta, Z) \leq \|A\|_2) &\leq \hat{g}(\theta)\eqsp,
\end{split}
\end{equation}
where $g(\theta)$ and $\hat{g}(\theta)$ are known decreasing functions. We then define the parameters as follows:
\begin{equation}
\label{eq:theta_1_2_realizable_def}
\begin{split}
\theta_1 &= \inf \{ \theta : g(\theta) \leq \delta \}\eqsp, \\
\theta_2 &= \inf \{ \theta : \hat{g}(\theta) \leq \delta \}\eqsp.
\end{split}
\end{equation}
Given \eqref{eq:g1_g2_def}, these parameters ensure that both $T(\theta_1,Y)$ and $\hat{T}(\theta_2,Z)$ have an underestimation probability at most $\delta$. We can then compare these estimators using the MAE criteria as in \eqref{eq:MAE}. Additionally, we can evaluate the tightness of the upper bounds $g(\theta)$ and $\hat{g}(\theta)$ by computing the probabilities
\[
\P(T(\theta_1, Y) \leq \norm{A})\eqsp, \quad \P(\hat{T}(\theta_2, Z) \leq \norm{A})\eqsp,
\]
where $\theta_1,\theta_2$ are defined by \eqref{eq:theta_1_2_realizable_def}. We provide steps of the algorithm with corresponding choice of $\theta$ in \Cref{algo:operator_norm_estimator}

\begin{algorithm}[h]
\caption{Randomized Operator Norm Estimation}
\begin{algorithmic}[1]
\Require  Implicit matrix $A \in \mathbb{R}^{m \times n}$; underestimation probability $\delta \in (0, 1)$; number of matvecs $k$; 
\State Generate matrix $X \in \rset^{n \times k}$ with columns $X_1, \dots, X_{k}$ sampled i.i.d. from a given distribution on $\rset^{n}$;
\State Choose the value $\theta$ based on a condition $g(\theta) \leq \delta$, where 
\[
\textstyle 
\P(T(\theta, X) \leq \|A\|_2) \leq g(\theta)
\]
\State Compute the statistic $T(\theta, X)$ based on the samples;
\Statex \textbf{return} $T(\theta, X)$ -- upper bound on $\norm{A}$
\end{algorithmic}
\label{algo:operator_norm_estimator}
\end{algorithm}

\section{Counterbalance algorithm}
\label{sec:algorithm}
The estimator that we suggest refines the classical single-matvec estimator given by \eqref{eq:simple_upper_bound_prob}. We briefly recall here advantages and drawbacks of this estimator. Let $A \in \rset^{m \times n}$, $X \sim \mathcal{N}(0,\Id_n)$, and consider the statistic $\theta \norm{AX}$, $\theta \geq 0$. In this case the following upper bound is known (see \cite{halko2011structure}): 
\begin{equation}
\label{eq:simple_upper_bound_prob_main}
\P(\theta \norm{AX} \leq \norm{A}) \leq \sqrt{\frac{2}{\pi}} \frac{1}{\theta}\eqsp.
\end{equation}
The main advantage of \eqref{eq:simple_upper_bound_prob_main} is that its r.h.s. does not depend on any properties of $A$ (singular values, effective rank, etc), which are typically unknown in practice. At the same time, for matrices $A$ with large effective rank $\rho$, $\norm{AX}$ significantly overestimates $\norm{A}$. Tighter upper bounds on the underestimation probability in \eqref{eq:simple_upper_bound_prob_main} typically depend on $\rho$, which is unknown in practice. This generally implies too large values of $\theta$ and loose upper bound $\theta \norm{AX}$ on $\norm{A}$. When the budget of $k$ matvecs is available, we use the modification of \eqref{eq:simple_upper_bound_prob_main} of the form 
\begin{equation}
\label{eq:max_k_matvecs}
\vanilla_k(\theta,X) = \theta \max_{1 \leq i \leq k} \norm{AX_i}\eqsp,
\end{equation}
which we further denote as Vanilla estimator. Here $X_1,\ldots,X_k$ are i.i.d. $\mathcal{N}(0,\Id_n)$ random vectors. In this case we rely on the corresponding upper bound 
\begin{equation}
\label{eq:simple_upper_bound_prob_main_max}
\P(\vanilla_k(\theta,X) \leq \norm{A}) \leq \left(\sqrt{\frac{2}{\pi}} \frac{1}{\theta}\right)^{k}\eqsp.
\end{equation}

\paragraph{Our method} The source of potential ineffectiveness of \eqref{eq:simple_upper_bound_prob_main} is that this bound can not be improved in case of matrices $A$ with $\rk{A} = 1$. At the same time, it is known that for rank-one matrix $A$, the iterates of power method \cite[Chapter 6]{halko2011structure} applied to symmetric matrix $A$ will converge in $1$ step to the true value $\norm{A}$. Similar ideas can be traced to the Wedderburn rank reduction procedure, see \cite{wedderburn1934lectures,chu1995rank}. Consider the ratio 
\begin{equation}
\label{eq:frac_matvec_def}
\frac{\norm{A^\top A Y}}{\norm{A Y}}\eqsp, \quad Y \sim \mathcal{N}(0,\Id)\eqsp.
\end{equation}
It provides a tight lower bound on $\norm{A}$ if its effective rank $\rho$ is close to $1$. In particular, if $\rk{A} = 1$, $\frac{\norm{A^\top A Y}}{\norm{A Y}} = \norm{A}$ for almost all $Y$. At the same time, $\norm{AY}$ tends to overestimate $\norm{A}$ as $\rho$ increases. Hence, it is natural to combine \eqref{eq:frac_matvec_def} with $\norm{AX}$. As a result, given independent random vectors $X_1,X_2 \sim \mathcal{N}(0,\Id_n)$, and $X = (X_1,X_2)$, we define our estimator $\counter(\theta, X)$ as 
\begin{equation}
\label{eq:conterbalance_def}
\counter(\theta,X) = \theta \sqrt{\left(\dfrac{\norm{A^\top A X_1}}{\norm{A X_1}}\right)^2 + \norm{A X_2}^2}\eqsp.
\end{equation}
\par 
Below we provide a theoretical bounds, which allows to set the parameter $\theta$ in \eqref{eq:conterbalance_def} for desired underestimation probability in \Cref{sec:parameter_choice}. Second, \eqref{eq:conterbalance_def} requires $3$ matvec computations, hence, it can be treated as a counterpart of \eqref{eq:max_k_matvecs} used with $k = 3$ vectors. Note also that the suggested statistic $\counter(\theta,X)$ relies on two independent random vectors $X_1$ and $X_2$. This property is crucial to provide an upper bound on the underestimation probability in \Cref{sec:parameter_choice} below. At the same time, in practice it is possible to use a single random vector at this stage. We leave the detailed analysis of \eqref{eq:conterbalance_def} with a single random vector $X$ as an interesting direction for future work. Finally, our method requires to compute matvecs with $A^{\top}$, which is a limitation of our method.

\subsection{Choosing $\theta$ in the Counterbalance algorithm}
\label{sec:parameter_choice}
Here we state the main result on how to choose parameter $\theta$ in \eqref{eq:conterbalance_def} in order to ensure the desired underestimation probability.

\begin{theorem}
\label{th:main}
Let $A \in \rset^{m \times n}$ be a matrix with effective rank $\rho$ and let $X_1, X_2$ be independent $N(0, \Id)$ random vectors, and $X = (X_1,X_2)$. Then for any $\theta \geq 1$, it holds that  
\begin{align}
\label{eq:main_bound}
\prob{\counter(\theta,X) \leq \norm{A}} \leq g(\theta,\rho)\eqsp,
\end{align}
where the function $g(\theta,\rho)$ has the form
\begin{align*}
g(\theta,\rho) = 
\begin{cases}
\displaystyle \theta^{-4}/8, & \text{} \rho \ge 7 \\[10pt]
\displaystyle \int_0^{\theta^{-2}}\chi_1^2\left(\dfrac{(\rho - 1) t}{1 - t}\right) \ 
p_{(1, \rho - 1)}\bracket{ \theta^{-2} - t}dt, & \text{} 1 + \theta^{-2} \le \rho \le 7 \\[10pt] 
\displaystyle \int_0^{\theta^{-2}}\chi_1^2\left(\dfrac{(\rho - 1) t}{1 - t}\right) \ 
p_{(1, 0)}\bracket{ \dfrac{\theta^{-2} - t}{\rho}}dt, & \text{} 1 \le \rho < 1 + \theta^{-2}. \displaystyle 
\end{cases}
\end{align*}
\end{theorem}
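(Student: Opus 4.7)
The plan is to reduce the problem to a handful of independent scalar random variables, and then control the resulting event by two successive comparison arguments: a Jensen-type step to integrate out the ratio, and a density-comparison step to integrate out the $\norm{AX_2}$-part.

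\textbf{Step 1 (SVD reduction).} I would begin with the SVD $A = U \Sigma V^\top$, $\Sigma = \diag(\sigma_1, \dots, \sigma_r)$, and use orthogonal invariance of the Gaussian to replace $X_j$ by $V^\top X_j$, reducing to $A = \Sigma$. By homogeneity of the event, normalise $\sigma_1 = \norm{A} = 1$ and write $\lambda_i = \sigma_i \in (0, 1]$, so that $\sum_i \lambda_i^2 = \rho$. With $Y_{j, i}$ denoting the iid $\mathcal{N}(0, 1)$ coordinates of $X_j$, the target event $\{\counter(\theta, X) \le \norm{A}\}$ reads
\[
\frac{\sum_i \lambda_i^4 Y_{1, i}^2}{\sum_i \lambda_i^2 Y_{1, i}^2} + \sum_i \lambda_i^2 Y_{2, i}^2 \le \theta^{-2}.
\]

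\textbf{Step 2 (Isolating the top singular direction).} Since $\lambda_1 = 1$, dropping the nonnegative contributions $\lambda_i^4 Y_{1, i}^2$ for $i \ge 2$ from the numerator gives the crucial bound
\[
\frac{\sum_i \lambda_i^4 Y_{1, i}^2}{\sum_i \lambda_i^2 Y_{1, i}^2} \ge \frac{U}{U + V_1}, \qquad U := Y_{1, 1}^2, \quad V_1 := \sum_{i \ge 2} \lambda_i^2 Y_{1, i}^2.
\]
Setting also $W := Y_{2, 1}^2 + V_2$ with $V_2 := \sum_{i \ge 2} \lambda_i^2 Y_{2, i}^2$, the four quantities $U$, $V_1$, $Y_{2, 1}$, $V_2$ are mutually independent, and the target event is contained in $\{U / (U + V_1) + W \le \theta^{-2}\}$.

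\textbf{Step 3 (Conditioning, Jensen, and density comparison).} Conditioning on $(V_1, W)$, on $\{W \le \theta^{-2}\}$ the inner condition $U / (U + V_1) \le \theta^{-2} - W$ rewrites as $U \le (\theta^{-2} - W) V_1 / (1 - (\theta^{-2} - W))$. Integrating out the independent $U \sim \chi_1^2$ produces the factor $\chi_1^2\bigl((\theta^{-2} - W) V_1 / (1 - (\theta^{-2} - W))\bigr)$. The density $(2\pi t)^{-1/2} e^{-t/2}$ of $\chi_1^2$ is strictly decreasing, so $\chi_1^2(\cdot)$ is concave on $[0, \infty)$; Jensen's inequality applied to the expectation over the independent $V_1$ lets me replace $V_1$ by $\mathbb{E}[V_1] = \rho - 1$. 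After a change of variable $t = \theta^{-2} - w$ this yields
\[
\prob{\counter(\theta, X) \le \norm{A}} \le \int_0^{\theta^{-2}} \chi_1^2\!\left( \frac{(\rho - 1) t}{1 - t} \right) f_W(\theta^{-2} - t) \, dt.
\]
The harder second comparison replaces $f_W$ (the density of $Y_{2, 1}^2 + V_2$) by $p_{(1, \rho - 1)}$ (the density of $\xi^2 + (\rho - 1) \eta^2$): among all weight vectors $(\lambda_2^2, \dots, \lambda_r^2)$ with entries in $[0, 1]$ and fixed sum $\rho - 1$, concentrating all mass on a single coordinate maximises the density of $\xi^2 + V_2$ pointwise on the relevant interval $[0, \theta^{-2}]$, by a Schur/majorisation-type argument. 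Combining the two bounds produces the middle case $1 + \theta^{-2} \le \rho \le 7$ of the theorem.

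\textbf{Step 4 (Extreme regimes and main obstacle).} For $\rho \ge 7$ the integral of Step 3 is simplified to the closed form $\theta^{-4}/8$ by a direct estimate: the largeness $\rho - 1 \ge 6$ provides the uniform bound $p_{(1, \rho - 1)}(u) \le 1 / (2 \sqrt{\rho - 1})$, while the subgaussian-type bound $\chi_1^2(x) \le \sqrt{2 x / \pi}$ controls the other factor; the $\rho$-dependent prefactors then cancel and an elementary computation yields the universal constant $1/8$. For $\rho < 1 + \theta^{-2}$ the density $p_{(1, \rho - 1)}$ degenerates as $\rho \to 1$, so the Schur comparison is replaced by one against the rescaled chi-squared density $p_{(1, 0)}((\theta^{-2} - t)/\rho)$, better adapted to the near-rank-one regime and yielding the third case formula. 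The main technical obstacle is the Schur-majorisation density bound $f_W \le p_{(1, \rho - 1)}$ on $[0, \theta^{-2}]$ under the constraint $\lambda_i \le 1$, together with pinpointing the precise threshold $\rho = 7$ beyond which the middle-case integral can be collapsed to the clean closed-form bound.
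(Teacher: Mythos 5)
Your Steps 1--3 track the paper's route for the middle regime reasonably well, but there are two issues, one fixable and one that sinks the $\rho\ge 7$ case.

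\textbf{The density-comparison step needs to be rephrased.} The extremal result from \cite{weightedchi2extremum} that the paper uses (and that your argument implicitly wants) is a statement about \emph{distribution functions}: among normalized weighted chi-squares, concentrating the weight on one coordinate maximizes $\prob{Q \le x}$ for every $x$. It is \emph{not} a pointwise density bound, and the claim $f_W(u) \le p_{(1,\rho-1)}(u)$ for all $u\in[0,\theta^{-2}]$ does not follow from it (two CDFs ordered everywhere can have densities that cross). Fortunately, since $t\mapsto\chi_1^2\bigl((\rho-1)t/(1-t)\bigr)$ is monotone increasing on $[0,\theta^{-2}]$ and vanishes at $t=0$, you can integrate by parts in $t$, apply the CDF bound $F_W(x)\le\chi^2_{(1,\rho-1)}(x)$, and integrate back; this recovers exactly the paper's middle-case integral. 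So Step 3 is repairable with the right phrasing. (The paper reaches the same expression more directly: bound $\prob{Q\le t}$ via Jensen as you do, but obtain the other factor by conditioning on $Q$ and applying the extremal theorem to $\xi_1^2+\sum_{i\ge 2}(\sigma_i^2/\sigma_1^2)\xi_i^2$, then Fubini. Same ingredients, different bookkeeping.)

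\textbf{The $\rho\ge 7$ case does not work as you describe.} You propose to bound the middle-case integral using $p_{(1,\rho-1)}(u)\le \tfrac{1}{2\sqrt{\rho-1}}$ and $\chi_1^2(x)\le\sqrt{2x/\pi}$. Carrying this out, the $(\rho-1)$ factors cancel exactly as you say, but what remains is
\[
\frac{1}{\sqrt{2\pi}}\int_0^{\theta^{-2}}\sqrt{\frac{t}{1-t}}\,dt \;\approx\; \frac{2}{3\sqrt{2\pi}}\,\theta^{-3}\approx 0.27\,\theta^{-3},
\]
which is $\Theta(\theta^{-3})$, not $\Theta(\theta^{-4})$, and numerically much larger than $\theta^{-4}/8$ (for instance at $\theta=1$ it gives about $0.63$ versus the required $0.125$). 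No choice of elementary bounds along these lines can produce a $\theta^{-4}$ rate, because the very first factor $\chi_1^2(\cdot)$ already contributes only a $\sqrt{t}$, and the density factor contributes a bounded function. The paper's proof of this case is structurally different: for $\rho\ge 7$ it \emph{discards the ratio term entirely}, bounding $\prob{\counter(\theta,X)\le\norm{A}}\le\prob{\theta\norm{AX_2}\le\norm{A}}$, then partitions the index set $\{2,\dots,k\}$ into three disjoint blocks each carrying weight $\sum_{j\in I_i}\sigma_j^2/\sigma_1^2\ge 1$ (possible precisely because $\rho-1\ge 6$ and each $\sigma_j^2/\sigma_1^2\le 1$), and applies the extremal theorem to each block in turn. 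Together with the $\xi_1^2$ term this yields $\chi_4^2(\theta^{-2})\le\theta^{-4}/8$. The ``$4$'' in $\chi_4^2$ is what produces the $\theta^{-4}$ rate, and the threshold $\rho=7$ is exactly the point at which the three-block split becomes available. This is the piece your proposal is missing, and it cannot be obtained by sharpening the middle-case integral estimate.
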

The proof of \Cref{th:main} is provided in \ref{sec:proof_th_main}.

The right-hand side of \eqref{eq:main_bound} for fixed $\theta$ depends only on the effective rank $\rho$. Hence, to upper bound $\prob{\counter(\theta,X) \leq \norm{A}}$ for any matrix $A$, we need to bound the right-hand side of \eqref{eq:main_bound} uniformly in $\rho$. Towards this aim, we treat the expression \eqref{eq:main_bound} as a function of $\rho$ and numerically maximize it over a finite interval $\rho \in [1;7]$ with high accuracy. Hence, one can approximate the r.h.s. of \eqref{eq:main_bound} as a function $g(\theta)$ computed once for any possible matrix $A$. The corresponding values of parameter $\theta$ for given underestimation probability $\delta$ are summarized in \Cref{tab:max_norm_upper_bound}. We compare the values of the parameter $\theta$ for $\counter(\theta,X)$ (referred to as $\theta_{\text{cb}}$) with Vanilla method \eqref{eq:simple_upper_bound_prob_main_max} used with $k=3$ matvecs. Note that parameter values for Counterbalance estimator are smaller, yet this fact itself does not imply yet that $\counter(\theta_{\text{cb}}, X)$ is smaller than $\vanilla_3(\theta_{\text{v}},X)$. We illustrate this comparison later in numerical experiments, see \Cref{sec:numerics}.

\begin{table}[ht]
\label{tab:max_norm_upper_bound}
\centering
\begin{tabular}{|c|c|c|}
\hline
\textbf{$\delta$} & \textbf{$\theta_{\text{cb}}$} & \textbf{$\theta_{\text{v}}$} \\
\hline
0.1   & 1.28 & 1.73 \\
0.05  & 1.58 & 2.17 \\
0.01  & 2.46 & 4.71 \\
0.001 & 5.10 & 7.90 \\
\hline
\end{tabular}
\caption{Values of parameter $\theta$ for Counterbalance \eqref{eq:conterbalance_def} and Vanilla \eqref{eq:max_k_matvecs} estimators at different $\delta$.}
\label{tab:theta-values}
\end{table}

\section{Numerics}
\label{sec:numerics}
\subsection{Comparison on synthetic data}
\label{sec:synthetic_data}
In this section, we illustrate the numeric performance of the Counterbalance estimator $\counter(\theta,X)$ with the choice of parameter $\theta$ suggested by \Cref{th:main}. Note that $\counter(\theta,X)$ requires $3$ matvecs, but $2$ of them need to be computed sequentially. Hence, it is not enough to compare our method only with Vanilla estimator $\vanilla_3$ from \eqref{eq:max_k_matvecs} with $3$ matvecs. We begin this section with  the comparison of the following methods:
\begin{itemize}
    \item Vanilla estimator $\vanilla(\theta,X)$, with 
    \begin{equation}
    \label{eq:vanilla_theta}
    \P(\vanilla_{k}(\theta,X) \leq \norm{A}) \leq \sqrt{\frac{8}{\pi^3}} \theta^{-3}\eqsp;
    \end{equation}
    \item Dixon estimator $\dixon(\theta, X) = \theta \max\bracket{\sqrt{\norm{A^\top AX_1}}, \norm{AX_2}}$, with  
    \begin{equation}
    \label{eq:dixon_theta}
    \prob{\dixon(\theta, X) \le \norm{A}} \le \frac{2}{\pi} \theta^{-3}\eqsp;
    \end{equation}
    \item Counterbalance estimator $\counter(\theta,X)$, with $\prob{\counter(\theta, X) \le \norm{A}}$ given by \Cref{th:main}.
\end{itemize}

We use Dixon estimator because it not only uses the same matvec budget, but also same sequential matvec budget as our method. Moreover, similarly to the Counterbalance estimator, it requires a matvec with $A^{\top}$. 
\par 
We first illustrate the performance of our method on a synthetic matrix $A \in \rset^{m \times n}$. Since our estimator is unitary-invariant, synthetic matrices are completely defined by their nonzero singular values. All synthetic matrices are generated as follows:  
\begin{itemize}
    \item Fix nonzero singular values $(\sigma_1, \dots, \sigma_k)$.
    \item Generate orthogonal matrices $U, V$ via the QR decomposition of random Gaussian matrices.
    \item Form $A = U \, \Sigma \, V^{\top}$, where $\Sigma = \diag(\sigma_1,\ldots,\sigma_k)$. 
\end{itemize}  

Since $\prob{\counter(\theta, X) \ge \norm{A}} = 1$ for matrices $A$ with $\rk A = 1$ as soon as $\theta \ge 1$, we do not consider such matrices in our numerics. For matrices $A$ with large effective rank $\rho$, $\norm{AX} \gg \norm{A}$, so each method significantly overestimates $\norm{A}$. Thus as an illustrative example for our method we first consider matrix of rank $2$ and singular values
\[
(\sigma_1, \sigma_2) = (1, 0.3)\eqsp.
\]
This matrix has rank $\rho = 1.09$. We consider $N = 10^6$ realizations of estimates $\vanilla_3(\theta,X)$, $\dixon(\theta, X)$, and $\counter(\theta,X)$ with oracle choice of parameter \eqref{eq:theta_1_2_oracle_def} and with realizable choice of $\theta$ as given in \eqref{eq:vanilla_theta}-\eqref{eq:dixon_theta}. We plot kernel density estimators for respective distributions in \Cref{fig:oracle_comparison} and \Cref{fig:feasible_comparison}, respectively. Note that $\counter(\theta,X)$ is closer to $\norm{A}$ and typically takes smaller values. For future comparisons we omit the oracle choice of $\theta$, since it is not implementable in practice, and focus on the realizable choice of $\theta$.

\begin{figure}[H]
    \centering
    \begin{subfigure}[t]{0.48\linewidth}
        \centering
        \includegraphics[width=\linewidth]{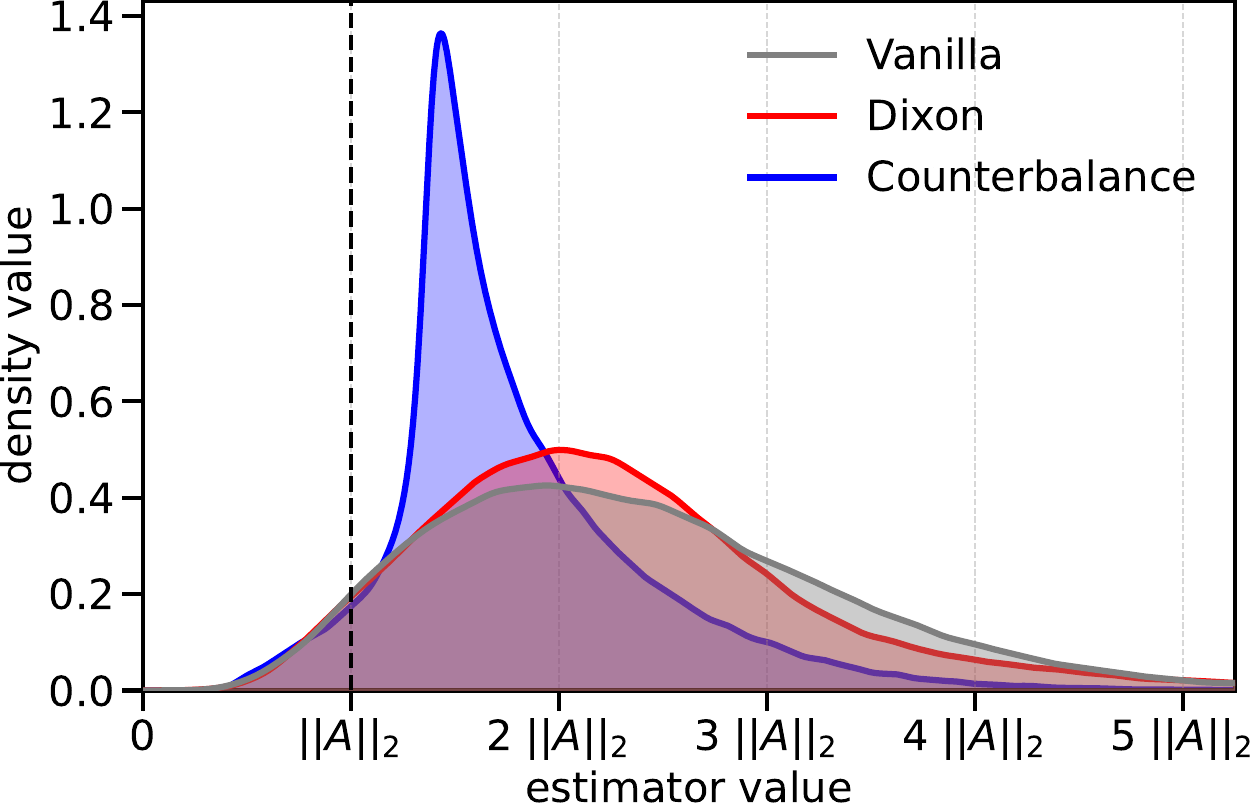}
        \caption{Oracle method}
        \label{fig:oracle_comparison}
    \end{subfigure}
    \hfill
    \begin{subfigure}[t]{0.48\linewidth}
        \centering
        \includegraphics[width=\linewidth]{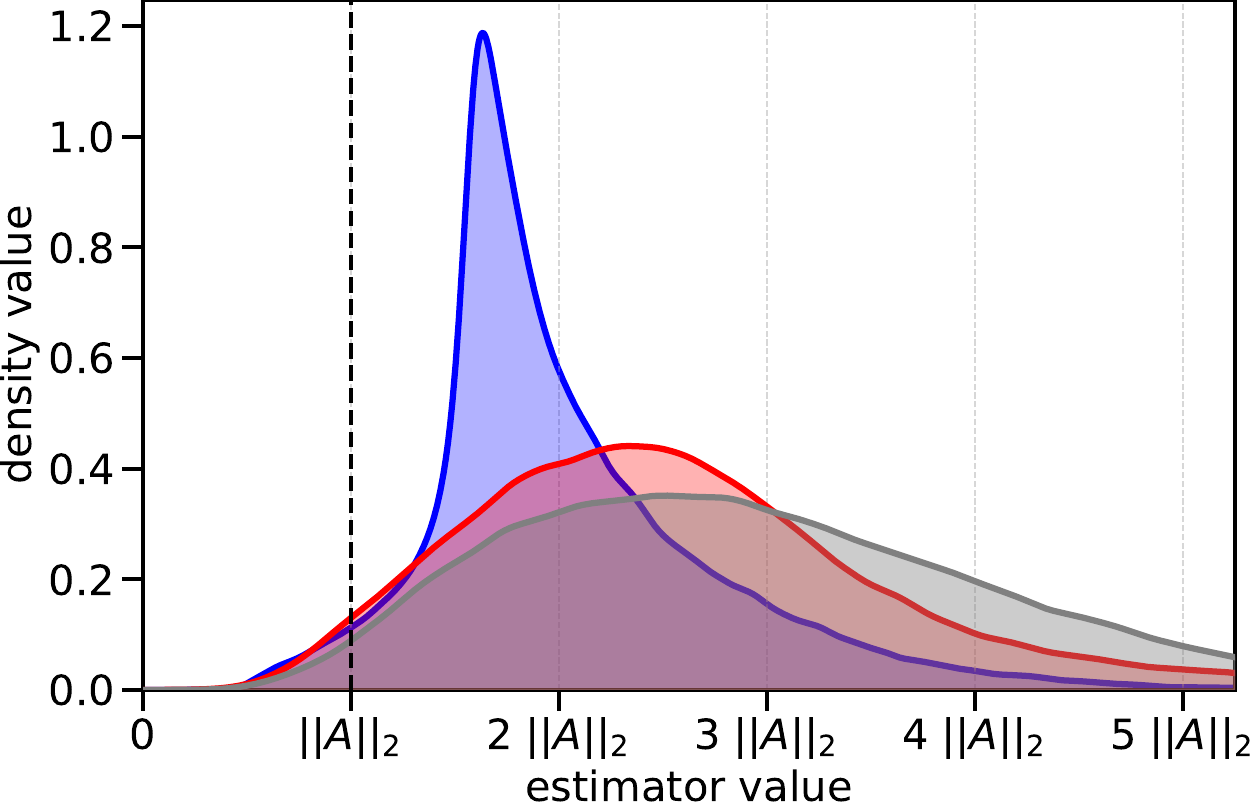} 
        \caption{Feasible method}
        \label{fig:feasible_comparison}
    \end{subfigure}
    
    \caption{Comparison of the Vanilla, Dixon, and Counterbalance estimators across various structured matrices. }
    \label{fig:structured_densities}
\end{figure}

\subsection{Detailed comparison}
We evaluate our method on the following matrices:
\begin{itemize}
    \item \textbf{Hilbert matrix ($\rho \sim 1.15$)}: $A \in \mathbb{R}^{100 \times 100}, \ A_{ij} = \frac{1}{i+j}$ — a classical example of a matrix with low effective rank.

    \item \textbf{Dominant matrix 0.1 ($\rho = 1.1$)}: Synthetic matrix $A \in \mathbb{R}^{100 \times 100}, \ \\(\sigma_1, \dots, \sigma_{11}) = (1, \underbrace{0.1, \dots, 0.1}_{10})$.

    \item \textbf{Dominant matrix 0.5 ($\rho = 3.5$)}: Synthetic matrix $A \in \mathbb{R}^{100 \times 100}, \, \\(\sigma_1, \dots, \sigma_{11}) = (1, \underbrace{0.5, \dots, 0.5}_{10})$. A mid-rank version of the previous matrix with slightly larger $\rho$.

    \item \textbf{Fréchet derivative matrix ($\rho \sim 40$)}: we follow the description provided in \cite{kressner2021norm}. Let $T_n \in \rset^{n \times n}$ be a tridiagonal matrix with values $2/(n-1)^2$ on the diagonal and $-1/(n-1)^2$ on the off-diagonals. Set $n = 10$, define $H = -0.01\left(\Id \otimes T_n + T_n \otimes \Id\right)$, $H \in \rset^{100 \times 100}$, and let $A = D\{\exp{H}\}\in \mathbb{R}^{10000 \times 10000}$ be the corresponding differential. 

    \item \textbf{ResNet  convolutional layer Jacobian \cite{resnet} ($\rho \sim 220$)}: $A \in \mathbb{R}^{784 \times 784}$, which corresponds to the Jacobian of a ResNet convolution layer (input shape $28 \times 28$), constructed using automatic differentiation. We provide detailed description in \ref{sec:description}.
\end{itemize}

We show singular values of the above matrices in \ref{sec:description}. We fix underestimation probability $\delta = 0.05$ and use the corresponding values of parameter $\theta$. As shown in \Cref{fig:density_comparison}, experiments on both low- and high-rank matrices show that $\counter$ takes lower values. Despite structural differences for ResNet convolution layer and Frechet derivative, both statistics behave similarly. Indeed, when effective rank $\rho$ is large, $\norm{AX}$ with $X \sim \mathcal{N}(0,\Id_n)$ concentrates around $\normf{A}$, and consistently overestimates $\norm{A}$, both for each of the considered estimators. Yet smaller values of $\theta$ used in $\vanilla(\theta,X)$ implies smaller values of the underlying statistic, see \Cref{fig:density_comparison_F} and \Cref{fig:density_comparison_J}.

\begin{figure}[H]
    \centering

    \begin{subfigure}[t]{0.48\textwidth}
        \centering
        \includegraphics[width=\linewidth]{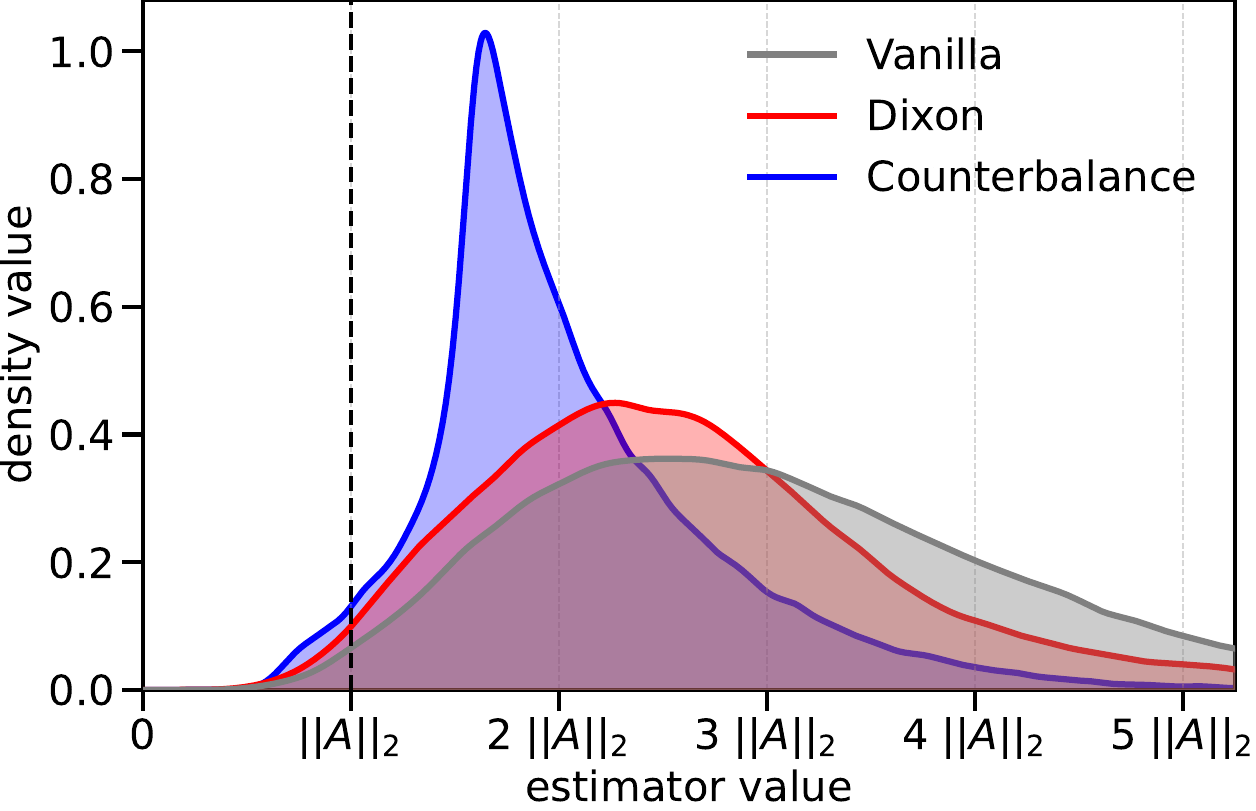}
        \caption{Hilbert matrix}
        \label{fig:density_comparison_H}
    \end{subfigure}
    \hfill
    \begin{subfigure}[t]{0.48\textwidth}
        \centering
        \includegraphics[width=\linewidth]{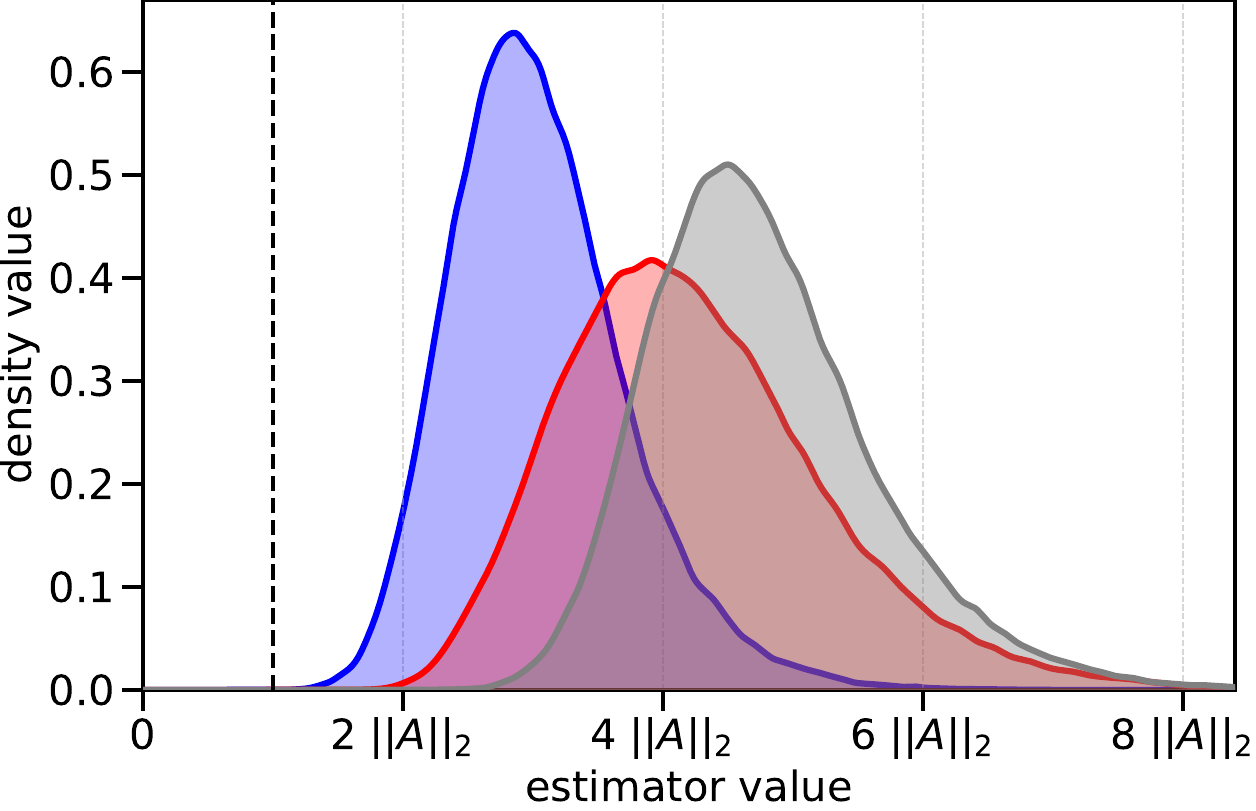}
        \caption{Dominant 0.5}
        \label{fig:density_comparison_dom5}
    \end{subfigure}

    \begin{subfigure}[t]{0.48\textwidth}
        \centering
        \includegraphics[width=\linewidth]{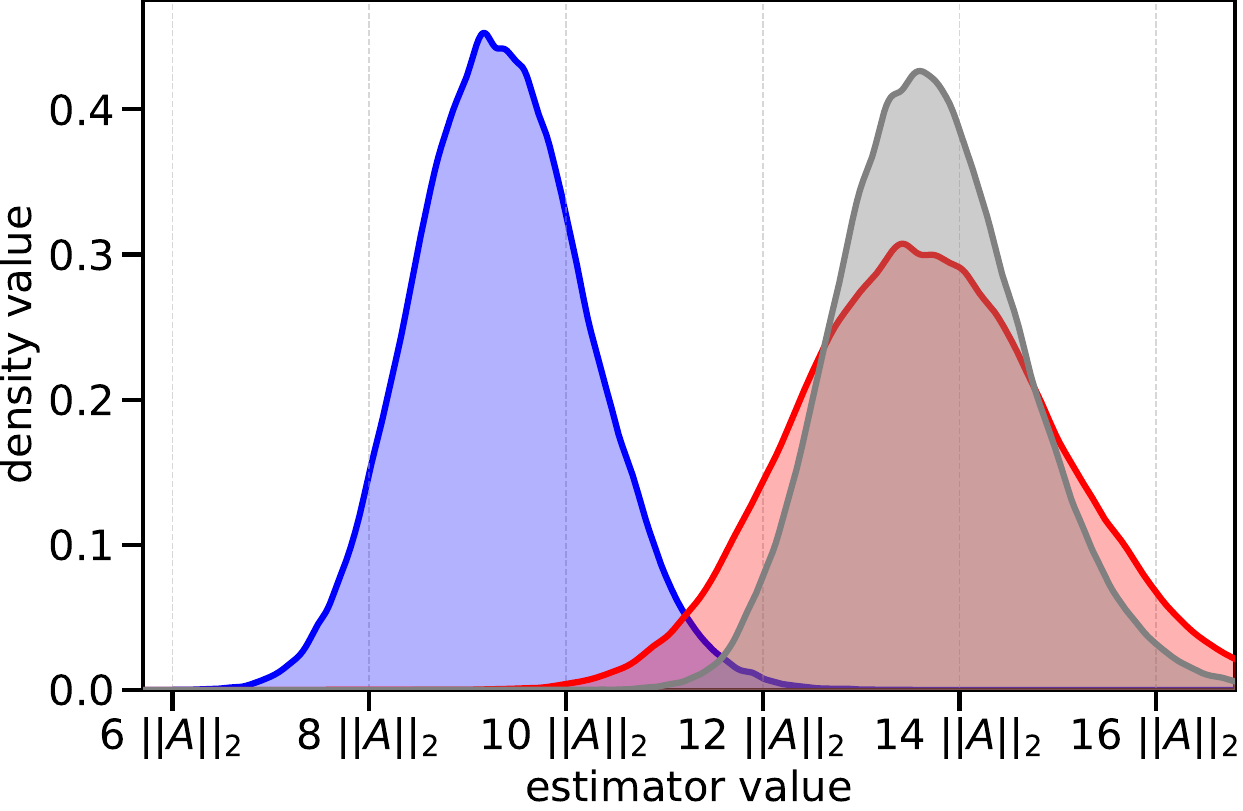}
        \caption{Frechet derivative}
        \label{fig:density_comparison_F}
    \end{subfigure}
    \hfill
    \begin{subfigure}[t]{0.48\textwidth}
        \centering
        \includegraphics[width=\linewidth]{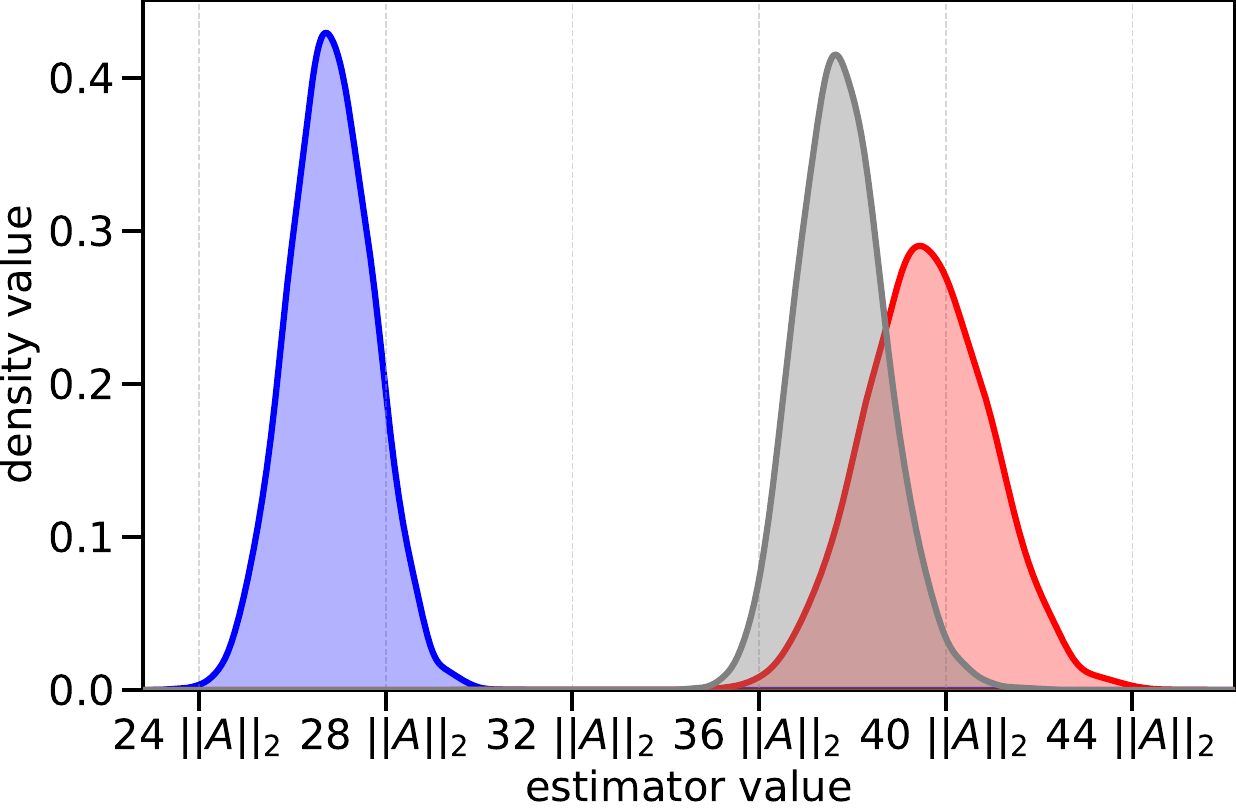}
        \caption{ResNet Jacobian}
        \label{fig:density_comparison_J}
    \end{subfigure}

    \caption{Empirical density comparison for different methods.}
    \label{fig:density_comparison}

\end{figure}

\paragraph{Comparison at increased matvec budget}

\Cref{fig:convergence_plots} illustrates behavior of considered estimators as the number of matvecs increases. Since our method uses three matvecs per estimate, we consider the total matvec budget to be a multiplier of three. The curves represent the mean estimate; shaded regions indicate empirical confidence intervals.
We consider $10^6$ realizations of $\vanilla(\theta,X)$, $\dixon(\theta, X)$, and $\counter(\theta,X)$ and observe the line plot describing empirical mean. We report confidence intervals based on 1 standard deviation.
\begin{figure}[H]
    \centering

    \begin{subfigure}[c]{0.48\textwidth}
        \centering
        \includegraphics[width=\linewidth]{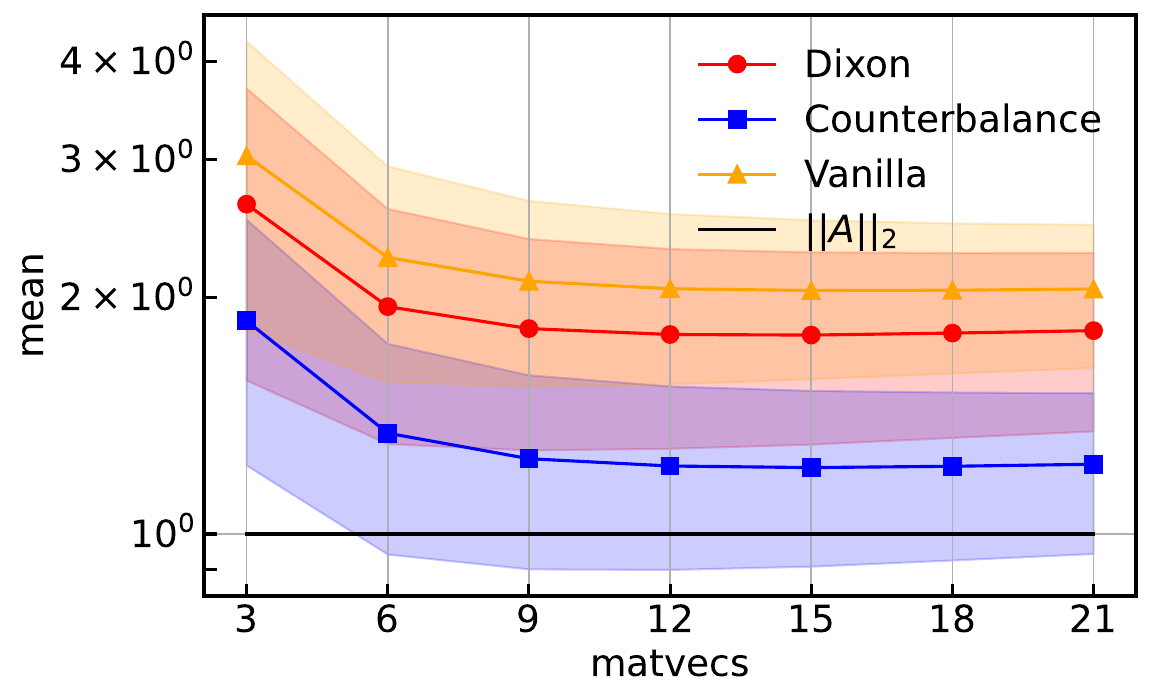}
        \caption{Hilbert matrix.}
        \label{fig:plot_H}
    \end{subfigure}
    \hfill
    \begin{subfigure}[c]{0.48\textwidth}
        \centering
        \includegraphics[width=\linewidth]{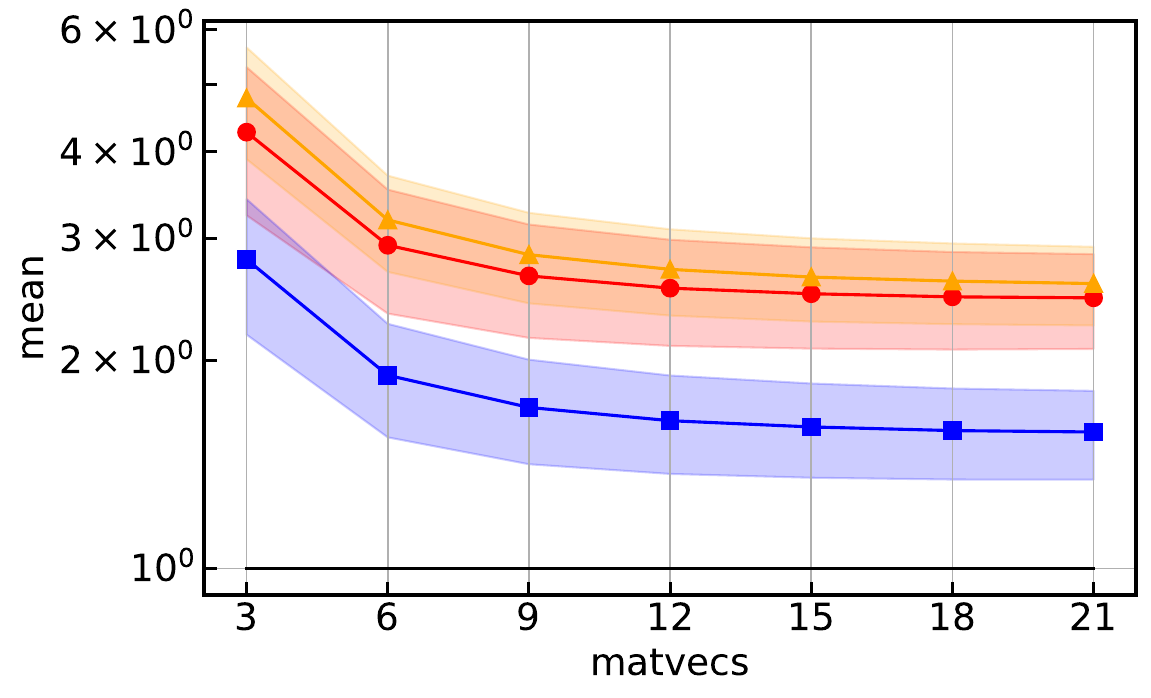}
        \caption{Dominant matrix 0.5.}
        \label{fig:plot_D05}
    \end{subfigure}
    \begin{subfigure}[c]{0.48\textwidth}
        \centering
        \includegraphics[width=\linewidth]{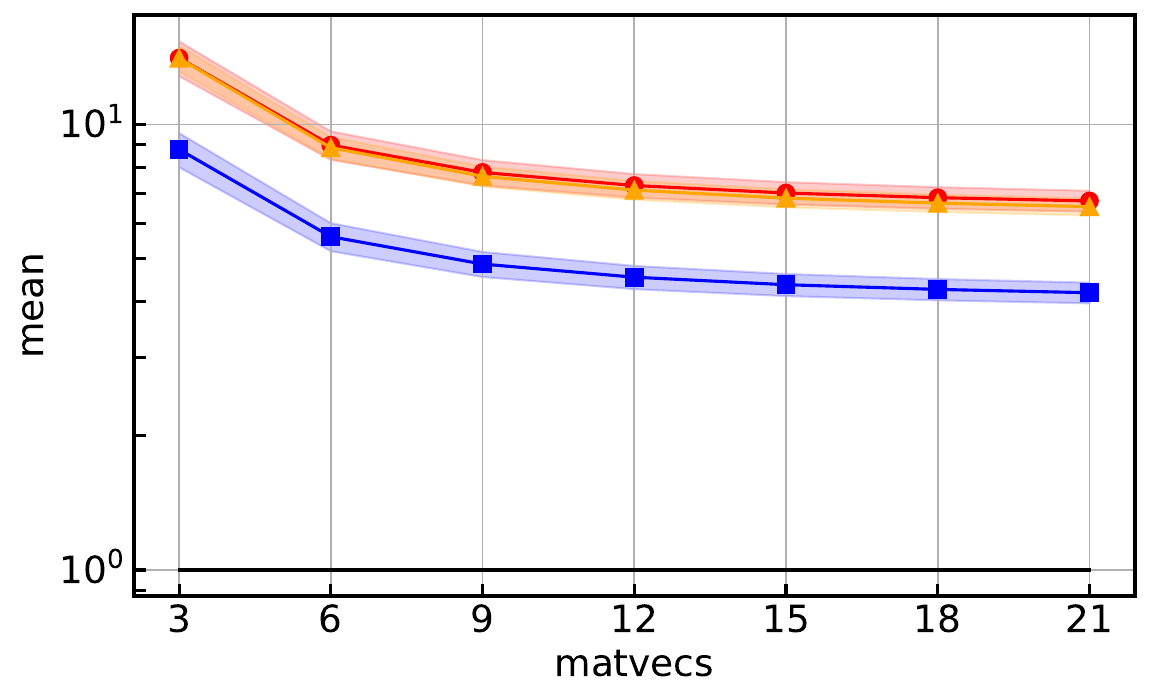}
        \caption{Frechet derivative}
        \label{fig:plot_F}
    \end{subfigure}
    \hfill
    \begin{subfigure}[c]{0.48\textwidth}
        \centering
        \includegraphics[width=\linewidth]{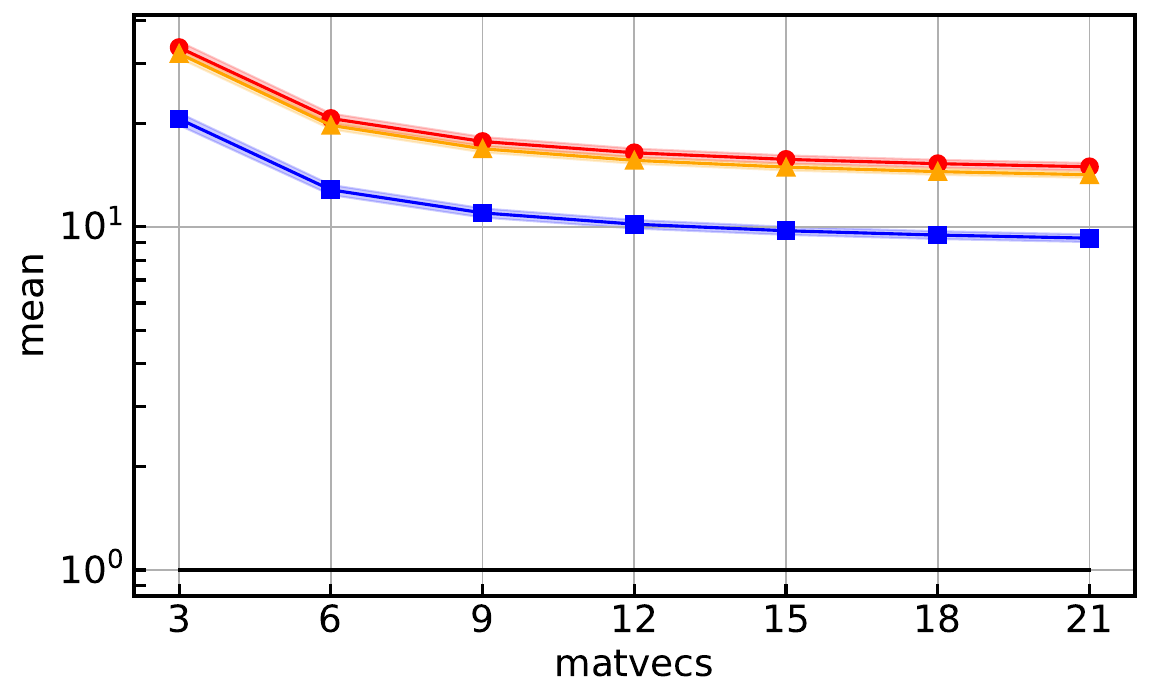}
        \caption{ResNet Jacobian.}
        \label{fig:plot_J}
    \end{subfigure}

    \caption{Convergence comparison of different estimators as the number of matrix-vector products increases. Solid lines show the mean estimates; shaded areas indicate empirical variance.}
    \label{fig:convergence_plots}
\end{figure}

For all matrices presented in \Cref{fig:convergence_plots}, estimators stabilize after 9 matvecs. However, our method consistently converges to smaller values and exhibits lower empirical variance. \Cref{fig:plot_F} and \Cref{fig:plot_J} highlight that all methods achieve low empirical variance on these matrices, since $\norm{AX}$ concentrates around its expectation for matrices with large effective rank $\rho$. 
\par 
\Cref{tab:delta_real} displays $\delta_{real} = \prob{T(\theta, X) \le \norm{A}}$ that measure the tightness of the corresponding bound on the underestimation probability suggested in \eqref{eq:main_bound}. \Cref{tab:delta_real} clearly demonstrate the superior performance of $\counter$. It consistently achieves the values of $\delta_{\text{real}}$ closer to $0.05$ (desired underestimation probability) for low-rank matrices. Here "Rank $2$" matrix corresponds to the example described in \Cref{sec:synthetic_data}. At the same time, results indicate that even for matrices with moderate effective rank $\rho$, $\counter$ still significantly overestimates $\norm{A}$, yet showing better performance compared to Vanilla and Dixon methods. 
\begin{table}[H]
\centering
\vspace{-5pt}
\caption{Values of underestimation probability $\delta_{real} = \prob{T(\theta, X) \le \norm{A}}$ measured for different test statistics with realizable choice of hyperparameter $\theta$ with 3 matvecs and $p = 0.05$.}
\label{tab:delta_real}
\resizebox{\textwidth}{!}{%
\begin{tabular}{|c|c|c|c|c|c|c|}
\hline
$\delta_{\textbf{real}}$ &\textbf{Hilbert} &
\textbf{Rank 2} &
\textbf{Dominant 0.1} & \textbf{Dominant 0.5} & \textbf{Fréchet derivative} & \textbf{ResNet Jacobian} \\
\hline
  Counterbalance & \textbf{0.034} & \textbf{0.031} & \textbf{0.048} & 0.0001 & 0 & 0 \\
 Vanilla & 0.011 & 0.019 & 0.016 & 0.0 & 0 & 0 \\
  Dixon & 0.019 & 0.029 & 0.031 & 0.0 & 0 & 0 \\
\hline
\end{tabular}%
}
\end{table}

The \Cref{tab:mae} compares the MAE of the considered statistics. $\counter$ achieves the lowest MAE in every case, often outperforming other methods by a significant margin. These results confirm that $\counter$ provides more accurate and reliable norm estimate.
\begin{table}[H]
\centering
\vspace{-5pt}
\caption{MAE for different test statistics with realizable choice of hyperparameter $\theta$  $p = 0.05$.}
\label{tab:mae}
\resizebox{\textwidth}{!}{%
\begin{tabular}{|c|c|c|c|c|c|c|}
\hline
MAE &\textbf{Hilbert} &
\textbf{Rank 2} &
\textbf{Dominant 0.1} & \textbf{Dominant 0.5} & \textbf{Fréchet derivative} & \textbf{ResNet Jacobian} \\
\hline
 Counterbalance& \textbf{1.01} & \textbf{1.06}& \textbf{0.97} &\textbf{ 1.99} & \textbf{8.65} & \textbf{25.82} \\
 Vanilla& 2.04 & 1.98 & 1.97 & 3.77 & 13.46 & 36.78 \\
 Dixon& 1.65 & 1.60 & 1.6 & 3.26 & 13.44 & 38.60 \\
\hline
\end{tabular}%
}

\end{table}

\section{Conclusion}
\label{sec:conclusion}
In this paper, we present the Counterbalance estimator $\counter(\theta,X)$, a hybrid norm estimator that improves upon classical randomized methods by reducing underestimation probability across the whole range of matrix types. Our theoretical guarantees and empirical evaluations demonstrate that this approach outperforms the baseline estimators on both low- and high-rank matrices. Our method remains computationally efficient and highly parallelizable, making it suitable for modern applications such as neural network analysis and large-scale matrix computations.

\bibliographystyle{elsarticle-num}
\bibliography{main_la.bib}

\newpage
\appendix

\section{Proofs}
\label{sec:proofs}
In this section we write $\chi^2_{k}(t)$, $k \in \nset$, for the distribution function of the chi-squared distribution with $k$ degrees of freedom.
In particular, we write $\chi^2_{1}(t)$ instead of $\chi^2_{(1,0)}(t)$ for conciseness.
\par
Our proof is based on the two key auxiliary results. First, our proof relies heavily on the following property of weighted chi-squared distribution, which is due to \cite{weightedchi2extremum}:

\begin{theorem}[Theorem 2 from \cite{weightedchi2extremum}] Let $n \in \nset$, $\xi_1,\ldots,\xi_n$ be i.i.d random values with distribution $\mathcal{N}(0, 1)$, and set $Q_n = \sum_{i=1}^n \lambda_i \xi_i^2$, where $\lambda_i > 0$ for any $i$ and $\sum_{i=1}^{n}\lambda_i = 1$. Then, for any $x \in [0;1]$, 
\begin{align*}
\sup_{\lambda_1,\ldots,\lambda_n: \lambda_i \geq 0, \sum_{i=1}^{n}\lambda_i = 1}\prob{Q_n \le x} = \chi_1^2(x) \eqsp.
\end{align*}
\end{theorem}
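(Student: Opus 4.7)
The goal is to show that $F(\lambda, x) := \P(Q_n \le x)$, regarded as a function on the standard simplex $\Delta_n := \{\lambda_i \ge 0, \sum_i \lambda_i = 1\}$, attains its supremum at a vertex $e_j$, where $F(e_j, x) = \chi_1^2(x)$. My plan is to establish that $F(\cdot, x)$ is Schur-convex on $\Delta_n$ for each $x \in [0, 1]$; since every $\lambda \in \Delta_n$ is majorized by $e_1$, Schur-convexity at once yields $F(\lambda, x) \le F(e_1, x) = \chi_1^2(x)$.

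To verify Schur-convexity via the Schur--Ostrowski criterion, I would condition on all coordinates except an arbitrary pair $(i, j)$ and reduce to analyzing $G(a, b, y) := \P(a\xi_1^2 + b\xi_2^2 \le y)$ along segments $a + b = c$, for thresholds $y = x - \sum_{k \notin \{i,j\}} \lambda_k \xi_k^2$ that are at most $x \le 1$. Passing to polar coordinates $\xi_1 = r\cos\theta$, $\xi_2 = r\sin\theta$ (with $r^2 \sim \chi_2^2$ independent of $\theta$ uniform on $[0, 2\pi)$) and integrating out $r$ yields the compact representation
\[
G(a, b, y) = 1 - \frac{1}{2\pi} \int_0^{2\pi} \exp\!\left(-\frac{y}{2(a\cos^2\theta + b\sin^2\theta)}\right) d\theta,
\]
and the identity $\int_0^{\pi/2} e^{-\alpha/\cos^2\theta}\,d\theta = (\pi/2)\,\mathrm{erfc}(\sqrt{\alpha})$ verifies that at the endpoint $(a, b) = (c, 0)$ the integral recovers $\chi_1^2(y/c)$. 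Schur-convexity of $G$ thus reduces to the planar inequality that, along $a + b = c$, the expectation above is minimized at the extremal point $(c, 0)$.

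The main obstacle is this planar inequality. The function $c \mapsto \exp(-y/(2c))$ is convex on $(0, y/4)$ and concave on $(y/4, \infty)$, so the natural second-order-stochastic-dominance argument (which would apply if the integrand were globally concave in $c$) breaks down in the regime where $a\cos^2\theta + b\sin^2\theta$ drops below $y/4$. My plan for this step is to differentiate along $a + b = c$ and, using the symmetry $\theta \leftrightarrow \pi - \theta$, rewrite the derivative as a single integral of the form $\int_0^{\pi/2} \cos\phi \, [H(c/2 + s\cos\phi) - H(c/2 - s\cos\phi)]\,d\phi$, where $H(t) = t^{-2} e^{-y/(2t)}$ peaks at $t = y/2$ and $s$ is the deviation $|a - b|/2$. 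The pointwise sign of the bracket is not fixed, but the hypothesis $y \le c \le 1$ should constrain the positive contributions (arising only in a neighborhood of $\phi = 0$ where $c/2 - s\cos\phi$ is near zero) to be dominated by the negative contributions elsewhere; making this quantitative will likely require a careful case split around the inflection point of $H$ and a Chebyshev-type rearrangement.

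A further subtlety to be managed is that, after conditioning, the residual threshold $y$ can exceed $a + b$, a regime in which $G$ itself need not be Schur-convex (indeed, at $a = b = 1/4$, $y = 1$ one has $G(1/4, 1/4, 1) > G(1/2, 0, 1)$). The argument must therefore not try to establish Schur-convexity of $G$ pointwise in $y$, but rather use the averaging over the residual $R = \sum_{k \notin \{i,j\}} \lambda_k \xi_k^2$ to recover monotonicity on the simplex: the Schur-Ostrowski derivative $(\lambda_i - \lambda_j)(\partial_{\lambda_i} F - \partial_{\lambda_j} F)$ is expressed as an integral against the law of $R$, and the inductive hypothesis together with the constraint $x \le 1$ ensures that the bulk of the $R$-mass lies in the range where the conditional inequality of the previous paragraph applies. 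This averaging is, I expect, the technical heart of the original proof in \cite{weightedchi2extremum}.
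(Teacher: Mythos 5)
First, note that the paper does not prove this statement at all: it is imported verbatim as Theorem~2 of \cite{weightedchi2extremum} and used as a black box, so there is no in-paper argument to compare yours against. Judged on its own terms, your proposal has a genuine gap at exactly the point where the theorem is hard. The Schur--Ostrowski reduction is sound in outline (every point of the simplex is majorized by a vertex, and a vertex gives $\chi_1^2(x)$), but it converts the theorem into the claim that $(\lambda_i-\lambda_j)(\partial_{\lambda_i}F-\partial_{\lambda_j}F)\ge 0$, i.e., into an inequality for the conditional two-coordinate problem \emph{averaged over the law of the residual} $R=\sum_{k\notin\{i,j\}}\lambda_k\xi_k^2$. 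You yourself exhibit why this cannot be settled pointwise: with $c=\lambda_i+\lambda_j=1/2$ and residual threshold $y=1$ the planar inequality reverses, since $\prob{(\xi_1^2+\xi_2^2)/4\le 1}=1-\rme^{-2}\approx 0.865$ exceeds $\prob{\xi_1^2/2\le 1}\approx 0.843$. Everything therefore hinges on showing that the unfavorable range of $y=x-R$ carries too little $R$-mass to flip the sign of the averaged derivative, and for this step the proposal offers only the expectation that the positive contributions ``should be'' dominated, together with a plan involving a case split and a Chebyshev-type rearrangement. No quantitative estimate is given, and it is not even argued that the $n$-variate distribution function is actually Schur-convex on the simplex for $x\le 1$ --- a strictly stronger assertion than the theorem, which only needs the maximum to sit at a vertex.

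This is not a presentational quibble: the extremal property of $\chi_1^2$ for Gaussian quadratic forms at thresholds $x\le 1$ is the entire content of the cited theorem, and the known proof is long and delicate. As written, your argument establishes the easy part (the value at the vertex and the formal reduction) and defers the hard part. If you want to pursue this route, the averaged planar inequality must be stated precisely and proved; otherwise, since the paper itself only invokes the result by citation, the appropriate course is to cite \cite{weightedchi2extremum} rather than to sketch an incomplete proof.
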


Second, we provide an upper bound for the underestimation probability in randomized power method \cite{martinsson2020randomized}.
\begin{lemma} \label{Lemma 1}
    Let $A \in \rset^{m \times n}$ and $Y \sim \mathcal{N}(0, I_n)$. For any $t \in [0, 1]$ it holds that 
    \begin{align*}
        \prob{ \dfrac{\norm{A^\top AY}}{\norm{AY}}  \le \sqrt{t} \norm{A}} \le \chi_1^2 \bracket{\dfrac{(\rho - 1) t}  {1 - t }}.
    \end{align*}
\end{lemma}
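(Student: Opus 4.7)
My plan is to reduce the statement to a comparison between two independent chi-squared-type random variables via a diagonalization through the SVD and a simple algebraic slackening, after which concavity of $\chi_1^2$ and Jensen's inequality close the argument.

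First, I would diagonalize $A$: write $A = U\Sigma V^{\top}$ with $\Sigma = \diag(\sigma_1,\ldots,\sigma_r)$ and $\sigma_1 = \norm{A}$. Since $\norm{AY}$ and $\norm{A^\top A Y}$ are invariant under left multiplication by orthogonal matrices, and $Z := V^{\top} Y \sim \mathcal N(0,\Id)$, we may work with $A = \Sigma$ and $Y = Z$. Setting $\mu_i := \sigma_i^2/\sigma_1^2 \in (0,1]$ (so that $\mu_1 = 1$ and $\sum_i \mu_i = \rho$), the event $\{\norm{A^\top AY}/\norm{AY} \leq \sqrt{t}\,\norm{A}\}$ is equivalent to $\sum_i \mu_i^2 Z_i^2 \leq t \sum_i \mu_i Z_i^2$, and after isolating the $i=1$ term, to
\[
(1-t)\, Z_1^2 \;\leq\; \sum_{i\geq 2} \mu_i (t - \mu_i)\, Z_i^2.
\]

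The key step is the slackening $\mu_i(t-\mu_i) \leq t\mu_i$, which is valid since $\mu_i^2 \geq 0$, regardless of the sign of $t-\mu_i$. Applied term-wise, this enlarges the event to $\{Z_1^2 \leq c\, \tilde Q\}$, where
\[
c := \frac{t(\rho-1)}{1-t}, \qquad \tilde Q := \sum_{i\geq 2} \frac{\mu_i}{\rho-1}\, Z_i^2.
\]
The weighted chi-squared $\tilde Q$ has weights summing to one, so $\PE[\tilde Q] = 1$, and it is independent of $Z_1$. Conditioning on $\tilde Q$ and using that $Z_1^2 \sim \chi_1^2$ yields
\[
\prob{Z_1^2 \leq c\, \tilde Q} \;=\; \PE\bigl[\chi_1^2(c\, \tilde Q)\bigr].
\]
The density of $\chi_1^2$ is $(2\pi s)^{-1/2} e^{-s/2}$, which is decreasing on $(0,\infty)$, so the CDF $\chi_1^2$ is concave on $[0,\infty)$. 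Jensen's inequality combined with $\PE[\tilde Q] = 1$ then gives $\PE[\chi_1^2(c\, \tilde Q)] \leq \chi_1^2(c)$, which is exactly the desired bound.

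The step I expect to be the most subtle is precisely the slackening $\mu_i(t-\mu_i) \leq t\mu_i$: although algebraically immediate, it is exactly what collapses all dependence on the sub-leading singular values into the single parameter $\rho - 1$ and produces an inequality whose two sides involve independent random variables, without which the conditional/Jensen finish would not apply. The SVD reduction, the conditional identity, and the concavity of $\chi_1^2$ are all routine. I note in passing that the extremal property of weighted $\chi^2$ quoted just above the lemma is not needed here; it is presumably reserved for the proof of the main theorem.
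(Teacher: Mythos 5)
Your proof is correct and follows essentially the same route as the paper's: SVD reduction, isolating the $i=1$ term, dropping the $-\sigma_i^4$ (equivalently $\mu_i^2$) term, then conditioning on the independent weighted $\chi^2$ variable and applying Jensen via concavity of $\chi_1^2$. Your normalization of $\tilde Q$ so that $\PE[\tilde Q]=1$ is a cosmetic repackaging of the paper's computation of $\PE[\eta]$, and your observation that the extremal weighted-$\chi^2$ theorem is not needed for this lemma is accurate.
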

This result reveals that the upper bound a priori for the effective dimension of $A$ guarantees a satisfying underestimation probability. In this work we use $\theta \ge 1$ to provide an estimator given apriori underestimation probability $p$. Then we represent the above inequality as
\begin{align*}
     \prob{ \theta \dfrac{\norm{A^\top AY}}{\norm{AY}}  \le  \norm{A}} \le \chi_1^2 \bracket{\dfrac{(\rho - 1)}  {\theta^{2} - 1 }}.
\end{align*}
\begin{proof}[Proof of Lemma \ref{Lemma 1}] First, recall singular decomposition $A = USV^\top$, where $S = \diag (\sigma_1, \dots, \sigma_k)$, then $\norm{AY}^2 = \sum \sigma_i^2 \xi_i^2$, where $\xi = V^\top Y \sim \mathcal{N}(0, I_n)$. Then
        \begin{align}
            \prob{ \dfrac{\norm{A^\top AY}}{\norm{AY}} \le \sqrt{t} \norm{A}} 
            &= \prob{\dfrac{\sum \sigma_i^4 \xi_i^2}{\sum \sigma_i^2 \xi_i^2} \le \sigma_1^2 t} \nonumber \\
            &= \prob{\sigma_1^4 \bracket{1 - t}\xi_1^2 \le \sum_{i=2}^k \xi_i^2 \bracket{\sigma_1^2 t \sigma_i^2  - \sigma_i^4}} \nonumber \\
            &\leq \prob{\xi_1^2 \leq (1-t)^{-1} \sum_{i=2}^k \xi_i^2 t\dfrac{\sigma_i^2}{\sigma_1^2} } \label{eq:cdf_bound}\eqsp.
        \end{align}
        It is easy to check that $\chi_1^2(u) = \prob{\xi_1^2 \leq u}$ is concave on $\mathbb{R}_+$. Let $k = \rk{A}$. Define a random variable $\eta$ by the relation
        $$
        \eta = (1-t)^{-1}\sum_{i=2}^k \xi_i^2 t\dfrac{\sigma_i^2}{\sigma_1^2}\eqsp.
        $$ 
        Note that $\eta$ is non-negative and independent of $\xi_1^2$. Moreover, 
        \[
        \PE[\eta] = \frac{(\rho - 1) t}{1 - t}\eqsp.
        \]
        Using Jensen's inequality we obtain  
        \begin{equation}
        \label{eq:jensen_bound}
        \begin{split}
        \prob{\xi_1^2 \leq \eta} &= \PE\bigl[\mathbb{I}\bracket{\xi_1^2 \le \eta}\bigr] = \PE_{\eta}\bigl[\PE_{\xi_1^2}\bigl[\mathbb{I}(\xi_1^2 \leq \eta) |  \eta \bigr] \bigr] \\
        &= \PE_{\eta}\bigl[\chi_1^2(\eta)\bigr] \leq \chi_1^2(\PE[\eta]).
        \end{split}
        \end{equation}
        Combining \eqref{eq:cdf_bound} and \eqref{eq:jensen_bound}, we get
        \begin{align*}
            \prob{\xi_1^2 \leq (1-t)^{-1} \sum_{i=2}^k \xi_i^2 \dfrac{\sigma_i^2}{\sigma_1^2}} \leq \chi_1^2\bracket{\frac{(\rho - 1) t}{1 - t}}\eqsp,
        \end{align*}
        and the statement follows.
\end{proof}

\subsection{Proof of \Cref{th:main}}
\label{sec:proof_th_main}
We consider separately the three cases outlined in \eqref{eq:main_bound}.

\paragraph{First case: $\rho \geq 7$} In this case we note that  
\begin{equation}
\label{eq:t_cb_appendix}   
\prob{\counter (\theta, X) \le \norm{A}} \le \prob{\theta \norm{AX} \le \norm{A}} \eqsp.
\end{equation}
Note that we can choose pairwise disjoin index sets $I_1,I_2,I_3$, such that 
\[
I_1 \sqcup I_2 \sqcup I_3 = \{2,\ldots,k\}\eqsp,
\]
and for any $i \in \{1,2,3\}$ it holds that 
\[
1 \leq \sum_{j \in I_i}\frac{\sigma_j^2}{\sigma_1^2} \leq 2\eqsp.
\]
Denote $\varkappa_i = \sum_{j \in I_i} \dfrac{\sigma_j^2}{\sigma_1^2}\xi_j^2$. Then we rewrite \eqref{eq:t_cb_appendix} using freezing lemma and independence of $\xi_i^2$ as follows:
\begin{align*}
    \prob{\theta \norm{AX} \le \norm{A}} = \prob{\xi_1^2 + \varkappa_1 + \varkappa_2 + \varkappa_3 \leq \theta^{-2}} = \PE_{\xi_1^2, \varkappa_1, \varkappa_2}\bigl[g(\xi_1^2, \varkappa_1, \varkappa_2)\bigr]\eqsp,
\end{align*}
where for $u,v,t \geq 0$ we have defined $g(u, v, t)$ as
\begin{align*}
g(u, v, t) = \prob{\varkappa_3 \leq \theta^{-2} - u - v - t}.
\end{align*}
 we now use Theorem 2 from \cite{weightedchi2extremum} to provide pointwise upper bound for the above function:
\begin{align*}
    g(u, v, t) \leq \chi_{1}^2\bracket{\dfrac{\theta^{-2} - u - v - t}{\PE[\varkappa_3]}}\eqsp.
\end{align*}
Then, using that $\PE[\varkappa_3] = \sum_{j \in I_3}\frac{\sigma_j^2}{\sigma_1^2} \geq 1$, we get  
\begin{align*}
    \PE_{\xi_1^2, \varkappa_1, \varkappa_2}\bigl[g(\xi_1^2, \varkappa_1, \varkappa_2)\bigr] \leq \PE_{\xi_1^2, \varkappa_1, \varkappa_2}\bigl[\chi_{1}^2\bracket{\theta^{-2} - \xi_1^2 - \varkappa_1 - \varkappa_2}\bigr]\eqsp.
\end{align*}
Now we apply the same transformation to $\varkappa_1, \varkappa_2$ to obtain the final bound
\begin{align}
\prob{\counter (\theta, X) \le \norm{A}}
&\leq \chi_4^2(\theta^{-2}) \nonumber \\ 
&= 1 - \exp\biggl\{-\frac{1}{2\theta^2}\biggr\} \biggl(1 + \frac{1}{2\theta^2}\biggr) \nonumber \\
&\overset{(a)}{\leq} \frac{1}{8\theta^4} \label{eq:upper_bound_1}\eqsp,
\end{align}
where (a) follows from Tailor expansion.

\paragraph{Second case: $1 < \rho \leq 1 + \theta^{-2}$} Introduce a random variable $Q = \bracket{\dfrac{ \norm{A^\top AY}}{\sigma_1\norm{AY}}}^2$. Then 
        \begin{align}
            \prob{\counter_{\theta}(X, Y) \le \norm{A}} &= 
            \prob{\theta \sqrt{\norm{A}^2 Q + \norm{AX}^2} \le \norm{A}} \nonumber \\ 
            &= \prob{\theta \sqrt{Q + \xi_1^2 + \sum_{i=2}^k \xi_i^2 \dfrac{\sigma_i^2}{\sigma_1^2}} \le 1} = \nonumber \\
           & = \Exp{Q}{\prob{Q + \xi_1^2 + \sum_{i=2}^k \xi_i^2 \dfrac{\sigma_i^2}{\sigma_1^2} \le \theta^{-2} \ \cond \ Q }}.\label{T1_transformed_prob}
        \end{align}
        By the freezing lemma, we obtain
        \[
         \Exp{Q}{\prob{Q + \xi_1^2 + \sum_{i=2}^k \xi_i^2 \dfrac{\sigma_i^2}{\sigma_1^2} \le \theta^{-2} \ \cond \ Q }} = \Exp{Q}{g(Q)}\,,
        \]
        where, for $u \in \rset_+$, we set
        \[
        g(u) := \prob{\xi_1^2 + \sum_{i=2}^k \xi_i^2 \dfrac{\sigma_i^2}{\sigma_1^2} \le \theta^{-2} - u}.
        \]
        Note that $\xi_1^2 + \sum_{i=2}^k \xi_i^2 \dfrac{\sigma_i^2}{\sigma_1^2}$ follows a weighted ch-squared distribution with expectation 
        \[
        1 + \sum_{i=2}^k \dfrac{\sigma_i^2}{\sigma_1^2} = \rho\eqsp.
        \]
        Hence, applying Theorem 2 from \cite{weightedchi2extremum}, we get an upper bound on $g(u)$:
        \[
        g(u) \le \chi_1^2\bracket{\dfrac{\theta^{-2} - u}{\rho}}
        \]
        This upper bound implies that 
        \begin{align}
         \prob{\counter_{\theta}(X, Y) \le \norm{A}} &= \PE_{Q}[g(Q)] \le \Exp{Q}{\chi_1^2\bracket{\dfrac{\theta^{-2} - Q}{\rho}}} \nonumber \\ 
         & \le \int_0^{\theta^{-2}}\prob{Q \le t} \ 
        p_{(1, 0)}\bracket{ \dfrac{\theta^{-2} - t}{\rho}}dt \nonumber \\ 
        &\leq 
\int_0^{\theta^{-2}}\chi_1^2\left(\dfrac{(\rho - 1) t}{1 - t}\right) \ 
        p_{1, 0}\bracket{ \dfrac{\theta^{-2} - t}{\rho}}dt \eqsp. \label{eq:upper_bound_2}
        \end{align}
        \paragraph{Third case: $1 + \theta^{-2} \leq \rho < 7$} Again, by freezing lemma, we obtain
         \[
         \Exp{Q}{\prob{Q + \xi_1^2 + \sum_{i=2}^k \xi_i^2 \dfrac{\sigma_i^2}{\sigma_1^2} \le \theta^{-2} \ \cond \ Q }} = \Exp{\xi_1^2}{\Exp{Q}{g(\xi_1^2, Q}},
        \]
        where 
        \[
        g(u, v) := \prob{\sum_{i=2}^k \xi_i^2 \dfrac{\sigma_i^2}{\sigma_1^2} \le \theta^{-2} - u -v}.
        \]
        Since the r.h.s. of the above probability is constant, we now use Theorem 2 from \cite{weightedchi2extremum} to provide pointwise upper bound for the above function 
        \begin{align}
        \prob{\counter(\theta,X) \leq \norm{A}} &\leq \mathbb{E}_{\xi_1^2, Q}\bracket{{\chi_1^2\bracket{\dfrac{\theta^{-2} - \xi_1^2 - Q}{(\rho - 1)}}}} \nonumber \\
        &= \Exp{Q}{\chi_{(1, \rho - 1)}^2\bracket{\theta^{-2} - Q}} \nonumber \\
        &= \int_0^{\theta^{-2}} \prob{Q \le t} p_{(1, \rho - 1)}(\theta^{-2} - t) dt \nonumber \\
        & \leq \int_0^{\theta^{-2}}\chi_1^2\left(\dfrac{(\rho - 1) t}{1 - t}\right) \ 
        p_{(1, \rho - 1)}\bracket{ \theta^{-2} - t}dt \eqsp. \label{eq:upper_bound_3}
        \end{align}
        It remains to combine \eqref{eq:upper_bound_1}, \eqref{eq:upper_bound_2}, and \eqref{eq:upper_bound_3}, and the statement follows.

\section{Additional experimental details}
\label{sec:description}
We first provide the plots illustrating the decay rates of singular values of non-artificial matrices from \Cref{sec:numerics}. The plot is provided in \Cref{fig:singular_values_decay}.  

\begin{figure}[H]
    \centering

    \begin{subfigure}[t]{0.45\textwidth}
        \centering
        \includegraphics[width=0.85\linewidth]{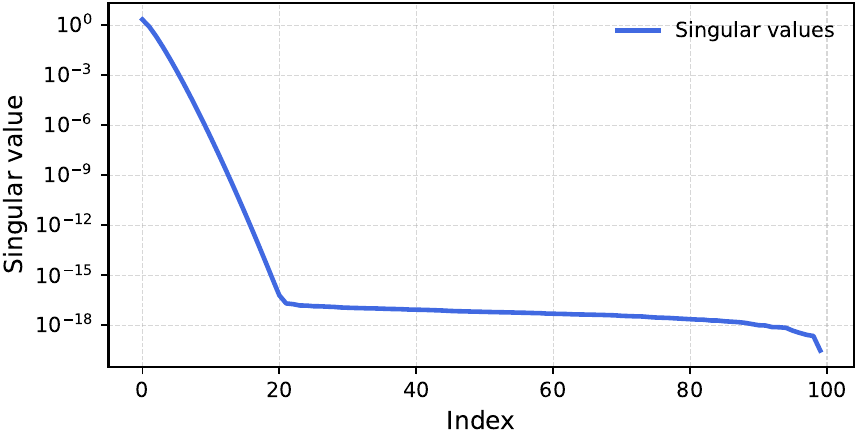}
        \caption{Hilbert matrix}
        \label{fig:sigma_decay_H}
    \end{subfigure}
    \hfill
    \begin{subfigure}[t]{0.45\textwidth}
        \centering
        \includegraphics[width=0.85\linewidth]{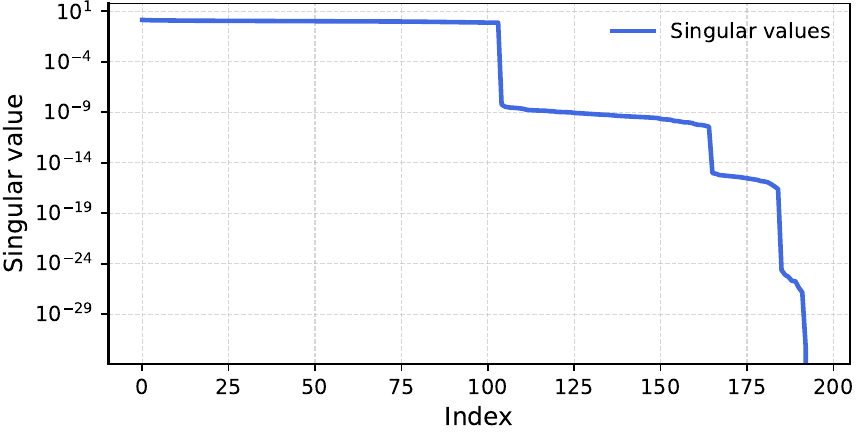}
        \caption{ResNet Jacobian}
        \label{fig:sigma_decay_J}
    \end{subfigure}

    \vspace{1em} 

    \begin{subfigure}[t]{0.6\textwidth}
        \centering
        \includegraphics[width=0.7\linewidth]{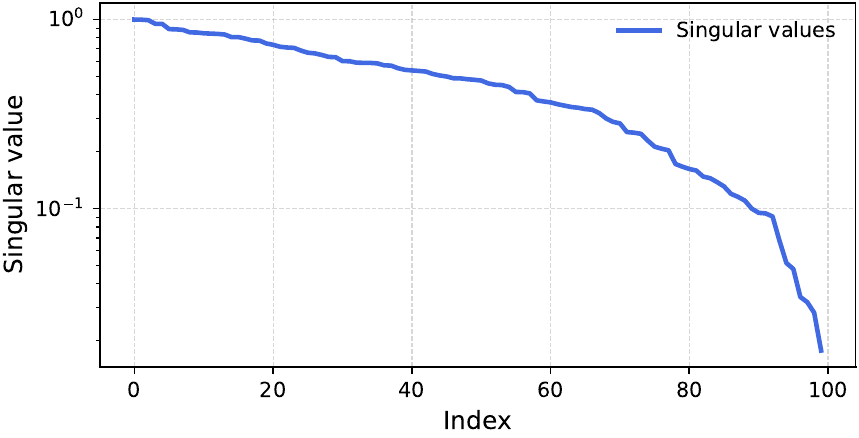}
        \caption{Frechet derivative}
        \label{fig:sigma_decay_F}
    \end{subfigure}

    \caption{Singular values for matrices from \Cref{sec:numerics}}
    \label{fig:singular_values_decay}
\end{figure}

For the last matrix from collection in \Cref{sec:numerics} (Dominant 0.1), we provide the density values for considered statistics and scaling of the estimate with increased matvec budget. Plots are provided in \Cref{fig:dominant_01_matrix}. Note that the plots are similar to the ones corresponding to Hilbert matrix.

\begin{figure}[H]
    \begin{subfigure}[t]{0.5\textwidth}
        \centering
        \includegraphics[width=0.95\linewidth]{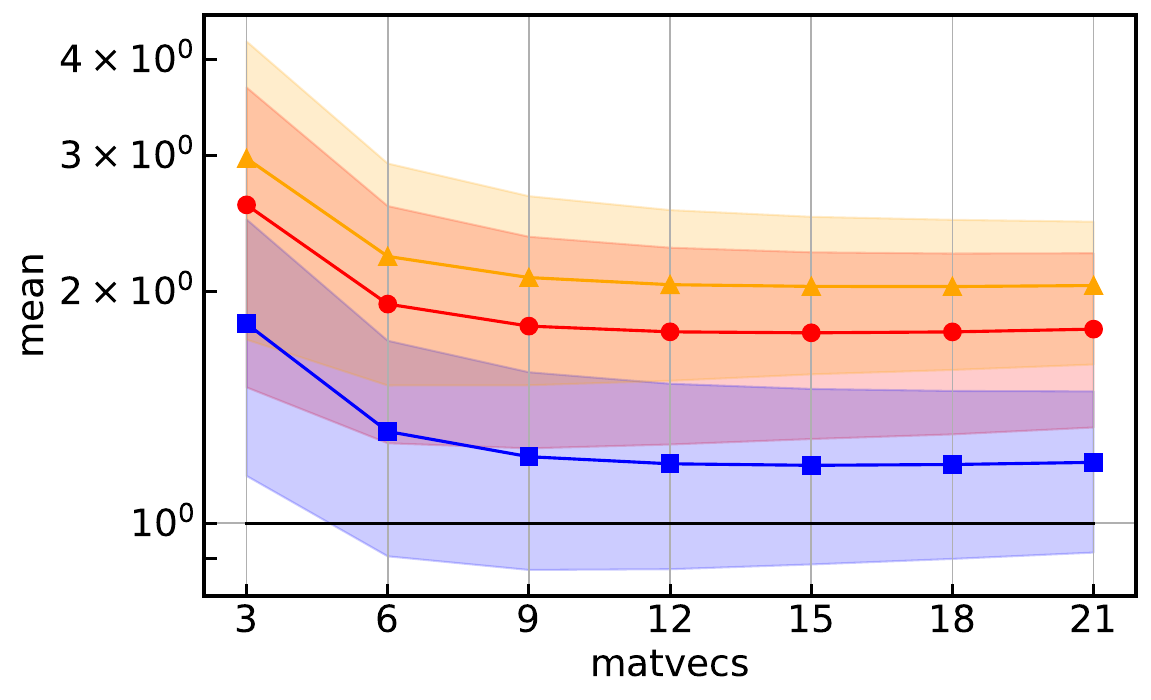}
        \caption{Scaled number of matvecs}
        \label{fig:plot_dom1}
    \end{subfigure}
    \begin{subfigure}[t]{0.5\textwidth}
        \centering
        \includegraphics[width=0.95\linewidth]{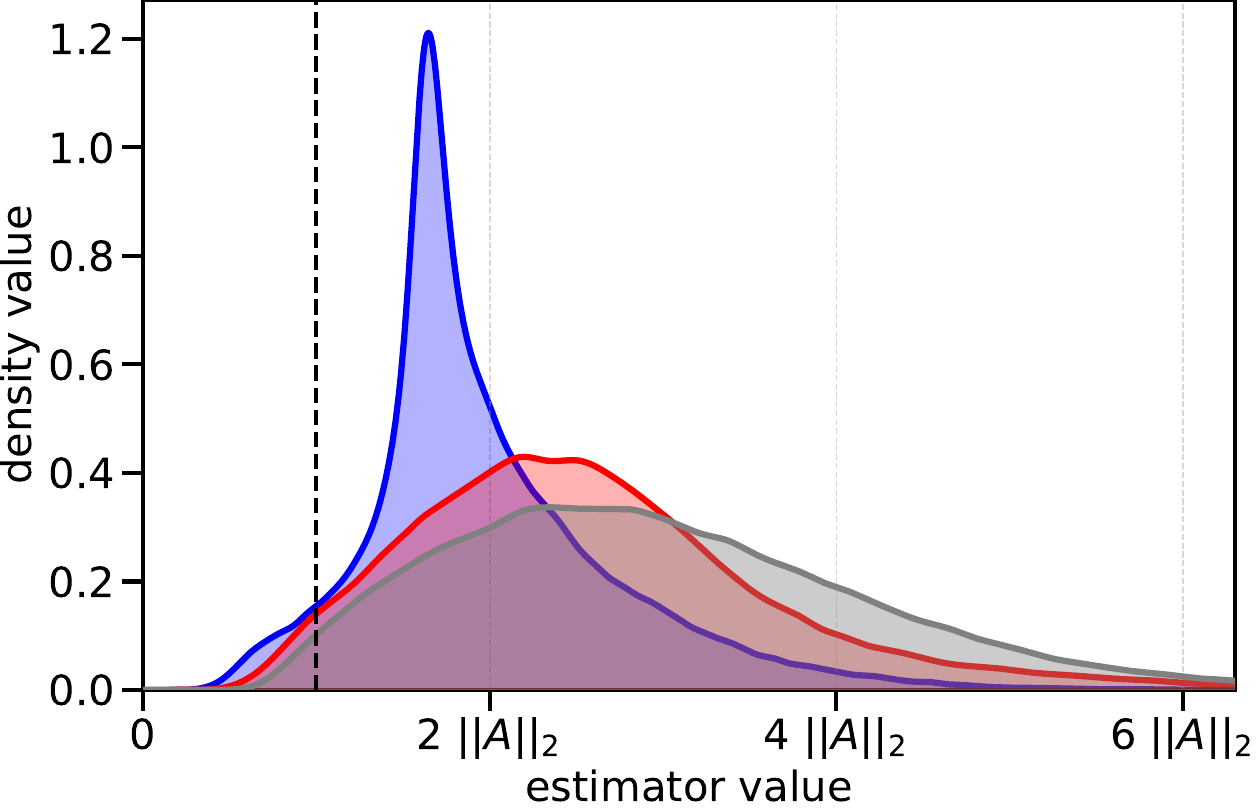}
        \caption{Estimator density}
        \label{fig:density_dom1}
    \end{subfigure}
\caption{Results for Dominant 0.1 matrix}
\label{fig:dominant_01_matrix}
\end{figure}
Then we give detailed description of Frechet derivative experiment considered in \Cref{sec:numerics}.
We use \cite[Section 3.2]{higham2008functions} to obtain that under certain assumptions on the smoothness of \(f\),

\[
f\left( \begin{bmatrix}
A & X \\
0 & A
\end{bmatrix} \right) =
\begin{bmatrix}
f(A) & Df\{A\}(X) \\
0 & f(A)
\end{bmatrix}.
\]
 This implies that we only need to generate sample vector $X \in \mathbb{R}^{10000}$ and reshape it to matrix of size $100 \times 100$, then consider vec$\bracket{Df\{A\}(X)}$ as a result of matvec operation.

\end{document}